%define S_p[R_p] etc...

\documentclass[11pt,leqno]{article}

\usepackage{amssymb,amsmath,amsthm}

\topmargin -.5in
\textheight 9in
\textwidth 6.5in
\oddsidemargin 0.0in
\evensidemargin 0.0in

\newcommand{\n}{\noindent}
\newcommand{\bb}[1]{\mathbb{#1}}
\newcommand{\cl}[1]{\mathcal{#1}}
\newcommand{\vp}{\varepsilon}
\newcommand{\ovl}{\overline}

\theoremstyle{plain}

\newtheorem{thm}{Theorem}[section]
 
\newtheorem{pro}[thm]{Proposition}
\newtheorem*{prop}{Proposition}

\newtheorem{lem}[thm]{Lemma}
\newtheorem{cor}[thm]{Corollary}

\theoremstyle{remark}
\newtheorem*{rk}{Remark}
\newtheorem{rem}[thm]{Remark}

\theoremstyle{definition}

\numberwithin{equation}{section}

\begin{document}

\title{Real interpolation between row and column spaces}

\author{by\\
Gilles Pisier\\
Texas A\&M University\\
College Station, TX 77843, U. S. A.\\
and\\
Universit\'e Paris VI\\
Equipe d'Analyse, Case 186, 75252\\
Paris Cedex 05, France}

%\date{}

\maketitle

\begin{abstract} 
We give an equivalent expression for the $K$-functional associated to the pair of operator spaces $(R,C)$ formed by the rows and columns respectively. This yields a description of the real interpolation spaces for the pair $(M_n(R), M_n(C))$ (uniformly over $n$). More generally, the same result is valid when $M_n$ (or $B(\ell_2)$) is replaced by any semi-finite von~Neumann algebra. We prove a version of the non-commutative Khintchine inequalities (originally due to Lust--Piquard) that is valid for the Lorentz spaces $L_{p,q}(\tau)$
associated to a non-commutative measure $\tau$, simultaneously for the whole range $1\le p,q< \infty$, regardless whether
$p<2 $ or $p>2$. Actually, the main novelty is the case $p=2,q\not=2$. We also prove a certain simultaneous decomposition property
for the operator norm and the Hilbert-Schmidt one.

\end{abstract}

\section{Introduction}

Let $B(\ell_2)$ denote  the space of all bounded operators on $\ell_2$.
Let $R\subset B(\ell_2)$ (resp.\ $C\subset B(\ell_2)$) be the row (resp. column) operator spaces, defined by $R = \ovl{\text{span}}[e_{1j}\mid j\ge 1]$ (resp.\ $C = \ovl{\text{span}}[e_{i1}\mid i\ge 1]$). 
The couple   $(R,C)$ plays an important r\^ole in Operator space theory. In particular, it is known that the complex interpolation space $(R,C)_{1/2}$ coincides with the (self-dual) operator space $OH$. See \cite{P2} for details. We refer to \cite{Xu} for the real interpolation method in the operator space framework. In particular, Xu proved in \cite{Xu} that $(R,C)_{1/2,2}$ is completely isomorphic to $OH$.

This paper studies three problems concerning real interpolation for several pairs of Banach spaces associated to $(R,C)$.

 In \S 3, we consider the pair $(M(R),M(C))$ when $M=B(\ell_2)$. The space $M(R)$ consists of those $x=(x_n)$ with $x_n\in B(\ell_2)$ such that $\sum x_nx^*_n$ converges in the weak operator topology (w.o.t.\ in short) and $\|x\|_{M(R)} \overset{\text{def}}{=} \|(\sum x_nx^*_n)^{1/2}\|$. Then $M(C)$ is formed of those $x=(x_n)$ such that $(x^*_n)\in M(R)$ with norm $\|x\|_{M(C)} \overset{\text{def}}{=} \|(\sum x^*_nx_n)^{1/2}\|$.

The main result of \S 3 is an equivalent expression for the $K$-functional for this pair $(M(R),M(C))$. Our result extends to more general (semi-finite) von~Neumann algebras. As an application we can describe the interpolation space $X(\theta) = (M(R),M(C))_{\theta,\infty}$ for $0<\theta<1$. We find that if $x$ is in the latter space, then $\|x\|^2_{X(\theta)}$ is equivalent to the norm of the associated completely positive map $T_x\colon \ T\mapsto \sum x_nTx^*_n$ as an operator of ``very weak type $(p,p)$'' on the $L_p$-space associated to the trace of $M$ with $p=1/\theta$. The analogous result for the complex interpolation method was obtained in our previous works (see \cite{P3,P4}).
Our result can be interpreted as a description of the operator space structure of $(R,C)_{\theta,\infty}$ in the sense of \cite{Xu}. 
Our approach is based on a non-commutative version of a lemma originally due to Varopoulos,
that we extended with a different proof in a separate paper \cite{P6}. 

In \S 4, we present a version of the non-commutative Khintchine inequalities (originally due to Lust--Piquard \cite{LP1}) that is valid for the Lorentz spaces $L_{p,q}(\tau)$ associated to $(M,\tau)$. 
This provides an equivalent for the average over all signs
of  the norm in  $L_{p,q}(\tau)$ of a series  of the form $\sum \pm     x_n$     ($x_n\in  L_{p,q}(\tau)$).
The main interest of our result is the case of $L_{2,q}(\tau)$ which seemed out of reach of previous works (see \cite{Ji}). Here again our study concentrates on a pair of Banach spaces, but this time it is the pair $(A_0,A_1)$ where $A_0 = M(R)\cap M(C)$ and where $A_1$ is the natural predual of $M(R)\cap M(C)$, that we describe as the sum of the preduals of $M(R)$ and $M(C)$ and we denote it by $A_1 = M_*(R)+M_*(C)$.

In \S 5, we study another pair, namely the pair $(A_0,A_2)$ where $A_2 =(A_0,A_1)_{1/2,2}$. When $M=B(\ell_2)$, the space $A_2$ is nothing but $\ell_2(S_2)$ where $S_2$ is the Hilbert--Schmidt class. We formulate our result using the notions of ``$K$-closed'' and ``$J$-closed'' introduced in \cite{P7}, that are isomorphic versions of Peetre's notion of ``subcouple''. To give a more concrete statement, the following  can be viewed as the main point of \S 5:

There is a constant $c$ such that for any $x$ in $(M(R)+M(C))\cap\ell_2(S_2)$ there is a decomposition $x=x_1+x_2$ such that we have simultaneously
\begin{align}
\|x_1\|_{M(R)} + \|x_2\|_{M(C)} & \le c\|x\|_{M(R)+M(C)}\\
\|x_1\|_{\ell_2(S_2)}  + \|x_2\|_{\ell_2(S_2)} &\le c\|x\|_{\ell_2(S_2)}.
 \end{align}
In our more abstract terminology, this says that the pair $(A_0,A_2)$  is $J$-closed when viewed
as sitting inside (via the diagonal embedding)
the pair  $(M(R)\oplus M(C),\ell_2(S_2)\oplus\ell_2(S_2) ) $.
This is analogous to what was proved in \cite{KLW} (resp. \cite{P7}) for the pair $(H^\infty,H^2)$ inside $(L^\infty,L^2)$
 (resp. $(H^\infty(B(\ell_2)),H^2(S_2))$ inside $(L^\infty(B(\ell_2)),L^2(S_2))$

 In \S 6, we briefly include a comparative discussion of  what the non-commutative Khintchine inequalities
become in free probability and how it relates to our interpolation problems.
\vspace{.3in}

\n {\bf Acknowledgment.} I am grateful to Quanhua Xu and Yanqi Qiu for stimulating discussions.

\section{Notation and background}\label{sec1-}

We will use the real interpolation method. We refer to \cite{BL} for all undefined terms.
We just recall that if $(A_0,A_1)$ is a compatible couple of Banach spaces,
then for any $x\in A_0+A_1$ the $K$-functional is defined by
$$\forall t>0\qquad K_t(x;A_0,A_1)= \inf
\big({\|x_0\|_{A_0}+t\|x_1\|_{A_1}\ | \ x=x_0+x_1,x_0\in
A_0,x_1\in A_1}). $$
Recall that the (``real" or ``Lions-Peetre" interpolation) space
$(A_0,A_1)_{\theta,q}$ is defined, for $0<\theta<1$ and $1\le q\le \infty$, as the space of all $x$
in $A_0+A_1$ such that $\|x\|_{\theta,q} <\infty$ where
 $$\|x\|_{\theta,q} =(\int{(t^{-\theta}K_t(x,A_0,A_1))^{q}
dt/t})^{1/q} ,$$
with the usual convention when $q=\infty$.

Let $M$ be a von Neumann algebra equipped with a semi-finite faithful normal trace $\tau$. 
The basic example is $M=B(\ell_2)$, equipped with the  usual trace, and, to improve readability,  we will present most of our results in this special case
with mere  indications for the extension to the general case.

Let $M_*$ be the predual of $M$.
As is well known (see e.g. \cite{PX})
$M_*$ can be identified with the non-commutative $L_1$-space
associated to $\tau$ usually denoted by $L_1(\tau)$. When $M=B(\ell_2)$, $M_*$ is the classical ``trace class" $S_1$.
More generally, for any $1\le p<\infty$ we denote by $L_p(\tau)$
the associated non-commutative $L_p$-space. By convention we set $L_\infty(\tau)=M$.
When $M=B(\ell_2)$, $M_*$ (resp. $L_p(\tau)$) is the classical ``trace class" $S_1$ (resp. the Schatten class $S_p$).
See e.g.  \cite{FK} or  \cite{PX} for more information on  non-commutative $L_p$-spaces. 
We will first mainly use the case $p=2$ and we denote its norm simply by $\|.\|_2$.

We always denote by $p'$ the conjugate of $1\le p\le \infty $ defined by $p^{-1}  +  p'^{-1}=1$.

We denote by ${\cl P}(M)$ or simply by ${\cl P}$ the set of all (self-adjoint) projections in $M$.

We denote by $M(R)$ (resp.\ $M(C)$) the space of sequences $x=(x_n)$ with coefficients $x_n\in M$ such that $(\sum^\infty_1 x_nx^*_n)^{1/2}\in M$ (resp.\ $(\sum^\infty_1 x^*_nx_n)^{1/2}\in M$). Here we implicitly assume that the series $\sum^\infty_1 x_nx^*_n$ (resp.\ $\sum^\infty_1 x^*_nx_n$) converge in (say) the  w.o.t. . We equip $M(R)$ (resp.\ $M(C)$) with the natural norm
\[
\|x\|_{M(R)} = \left\|\left(\sum x_nx_n^*\right)^{1/2}\right\|_M \left(\text{resp. } \|x\|_{M(C)} = \left\|\left(\sum x^*_nx_n\right)^{1/2}\right\|_M\right).
\]
Note that $M(R)$ (resp.\ $M(C)$) can be identified with $M\ovl\otimes R$ (resp.\ $M\ovl\otimes C$), i.e.\ the weak-$*$ closure of $M\otimes R$ (resp.\ $M\otimes C$) in the (von Neumann sense) tensor product $M\ovl\otimes B(\ell_2)$.

We will consider $(M(R), M(C))$ as an interpolation couple in the obvious way using the inclusions $M(R)\subset M^{\bb N}$, $M(C)\subset M^{\bb N}$.

Similarly, we denote by $M_*(R)$ (resp.\ $M_*(C)$) the space of sequences $x=(x_n)$ with coefficients $x_n\in M_*$ such that  $(\sum^\infty_1 x_nx^*_n)^{1/2}\in M_*$ (resp.\ $(\sum^\infty_1 x^*_nx_n)^{1/2}\in M_*$). Here we   assume that the sequence $(\sum^N_1 x_nx^*_n)^{1/2}$ (resp.\ $(\sum^N_1 x^*_nx_n)^{1/2}$) norm-converges   in $M_*$ when $N\to \infty$.
We equip $M_*(R)$ (resp.\ $M_*(C)$) with the natural norm
\begin{equation}\label{eqs1}
\|x\|_{M_*(R)} = \left\|\left(\sum x_nx_n^*\right)^{1/2}\right\|_{M_*} \left(\text{resp. } \|x\|_{M_*(C)} = \left\|\left(\sum x^*_nx_n\right)^{1/2}\right\|_{M_*}\right).
\end{equation}

Note that $M(R)=M_{*}(C)^*$ and $M(C)=M_{*}(R)^*$ isometrically with respect to the duality defined by $\langle x, y\rangle= \sum {\tau}(x_n  y_n)$.
(Equivalently, $\overline{M(R)}=M_{*}(R)^*$ 
and $\overline{M(C)}=M_{*}(C)^*$ with respect to the duality defined by $\langle x, \overline y\rangle= \sum {\tau}(x_n  y^*_n)$,
with the bar denoting complex conjugation.)

\section{$K$-functional between $R$ and $C$}\label{sec1}

Our main result is:

\begin{thm}\label{thm1.1}
For any $a = (a_n)\in M(R) + M(C)$ we have
\begin{equation}
k_t(a) \le K_t(a; M(R), M(C)) \le 2k_t(a)\tag*{$\forall t>0$}
\end{equation}
where
\[
k_t(a) = \sup\left\{\left(\sum \|Pa_nQ\|^2_2\right)^{1/2} \max \{\tau(P)^{1/2}, t^{-1}\tau(Q)^{1/2}\}^{-1} \ \Big| \ P,Q\in {\cl P}\right\}.
\]
\end{thm}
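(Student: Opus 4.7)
The lower bound $k_t(a)\le K_t(a;M(R),M(C))$ is immediate. For any decomposition $a=b+c$ with $b\in M(R)$, $c\in M(C)$ and any projections $P,Q\in\cl P$, the Minkowski inequality together with $\|P\|,\|Q\|\le 1$ gives
\[
\Bigl(\sum_n\|Pa_nQ\|_2^2\Bigr)^{1/2}\le\Bigl(\sum_n\|Pb_n\|_2^2\Bigr)^{1/2}+\Bigl(\sum_n\|c_nQ\|_2^2\Bigr)^{1/2}.
\]
Since $\sum b_nb_n^*\le\|b\|_{M(R)}^2\cdot 1$ and $\sum c_n^*c_n\le\|c\|_{M(C)}^2\cdot 1$, the two square roots are bounded by $\tau(P)^{1/2}\|b\|_{M(R)}$ and $\tau(Q)^{1/2}\|c\|_{M(C)}$ respectively; factoring out $\max\{\tau(P)^{1/2},t^{-1}\tau(Q)^{1/2}\}$ yields $k_t(a)\le\|b\|_{M(R)}+t\|c\|_{M(C)}$, and infimizing over decompositions finishes the lower bound.

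For the upper bound $K_t(a;M(R),M(C))\le 2k_t(a)$ I would dualize. The identifications $M(R)=M_*(C)^*$ and $M(C)=M_*(R)^*$ from Section~\ref{sec1-}, together with Hahn--Banach and a standard weak-$\ast$ density argument, reduce the claim to showing that every $y=(y_n)\in M_*(R)\cap M_*(C)$ with $\max(\|y\|_{M_*(C)},t^{-1}\|y\|_{M_*(R)})\le 1$ satisfies $|\sum_n\tau(a_ny_n)|\le 2k_t(a)$. Introducing the positive $L_1(\tau)$ elements $A=(\sum y_ny_n^*)^{1/2}$ and $B=(\sum y_n^*y_n)^{1/2}$ (with $\tau(A)\le t$, $\tau(B)\le 1$), I would factor $y_n=Aw_n=v_nB$ where $\sum w_nw_n^*\le 1$ and $\sum v_n^*v_n\le 1$, then use the layer-cake formulas $A=\int_0^\infty P_s\,ds$ and $B=\int_0^\infty Q_r\,dr$ for the spectral projections $P_s=1_{(s,\infty)}(A)$ and $Q_r=1_{(r,\infty)}(B)$.

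The core step is to rewrite $\sum_n\tau(a_ny_n)$ as a double integral in $(s,r)$ whose integrand, after a Cauchy--Schwarz in $n$, is controlled by $(\sum_n\|P_sa_nQ_r\|_2^2)^{1/2}\le k_t(a)\max\{\tau(P_s)^{1/2},t^{-1}\tau(Q_r)^{1/2}\}$; splitting the $(s,r)$-plane along the curve $\tau(P_s)=t^{-2}\tau(Q_r)$ and using the relevant branch of the max on each half produces two contributions whose sum is $2k_t(a)$. The main obstacle is precisely this interchange of the spectral flows of $A$ and $B$ together with the pairing against the non-$L_2$ contractions $w_n,v_n$: a na\"\i ve Cauchy--Schwarz fails because $\sum\|w_n\|_2^2$ and $\sum\|v_n\|_2^2$ are typically infinite, and the rearrangement that turns the projection-wise hypothesis into a usable bilinear bound is exactly what the non-commutative Varopoulos-type lemma of \cite{P6} is designed to perform. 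A standard truncation to finite-rank projections and finite sums then handles the case of a general semi-finite $(M,\tau)$, completing the proof.
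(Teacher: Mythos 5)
Your lower bound is correct and is essentially the paper's argument (triangle/Minkowski inequality plus the observation that a row bound absorbs $P$ and a column bound absorbs $Q$ via $\tau$). The upper bound, however, has a genuine gap in its ``core step.'' The dualization setup is right: reduce to $y\in M_*(R)\cap M_*(C)$ with $\max\{\|y\|_{M_*(C)},t^{-1}\|y\|_{M_*(R)}\}\le 1$, factor $y_n=Aw_n=v_nB$, and you correctly flag that $\sum\|w_n\|_2^2$ and $\sum\|v_n\|_2^2$ need not be finite. But the mechanism you then propose --- a \emph{double} integral over $(s,r)$ from two independent layer-cakes $A=\int P_s\,ds$, $B=\int Q_r\,dr$, followed by a split along the curve $\tau(P_s)=t^{-2}\tau(Q_r)$ --- does not produce the needed identity. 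Writing $\sum_n\tau(a_ny_n)$ with $y_n=Aw_n$ gives a \emph{single} integral $\int\sum_n\tau(a_nP_sw_n)\,ds$ in which $Q_r$ never appears; trying to feed in both factorizations at once amounts to claiming that the pointwise minimum of the two densities decomposes as a product of two one-variable integrals, which it does not (minimum is not multiplication). The constant $2$ in the theorem likewise does not come from halving a parameter plane.

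What actually works (the paper's Lemma~\ref{lem-d}, built on the elementary Lemma~\ref{elem}) is to pass to matrix entries in the two bases diagonalizing $g=t\bigl(\sum y_ny_n^*\bigr)^{1/2}$ and $f=\bigl(\sum y_n^*y_n\bigr)^{1/2}$, write $y_n(i,j)=(g_i\wedge f_j)\,\gamma_n(i,j)$ with $|\gamma_n(i,j)|\le|a_n(i,j)|+|b_n(i,j)|$ (this pointwise domination of $\gamma_n$ by the row- and column-normalized factors is the real replacement for the global Cauchy--Schwarz that fails), and then layer-cake the \emph{minimum} over a single common level $c$:
\[
g_i\wedge f_j=\int_0^\infty 1_{\{g_i>c\}}1_{\{f_j>c\}}\,dc
=\int_0^\infty m(c)\,\frac{1_{\{g>c\}\times\{f>c\}}}{m(c)}\,dc,\qquad
m(c)=t^{-2}\bigl|\{g>c\}\bigr|+\bigl|\{f>c\}\bigr|,
\]
with $\int_0^\infty m(c)\,dc=t^{-2}\sum g+\sum f\le 2$; this is where the factor $2$ comes from. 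Each level gives a piece $\chi_n=Qy_nP\,(\tau(P)+t^{-2}\tau(Q))^{-1/2}$ with $\sum\|y_n\|_2^2\le1$, against which $k_t(a)\le 1$ together with Cauchy--Schwarz \emph{in $n$} yields $\bigl|\sum\tau(a_n\chi_n)\bigr|\le 1$, hence $\bigl|\sum\tau(a_ny_n)\bigr|\le 2$. Finally, you cannot offload this to \cite{P6}: that reference proves (by a different method) an extension of the \emph{commutative} Varopoulos lemma, whereas the non-commutative convex-decomposition lemma you actually need is Lemma~\ref{lem-d} of the present paper and requires the bidiagonalization step above; it is not available as a black box.
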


 \begin{rk} The following was  pointed out by Q. Xu (by a modification of the proof of
 Lemma \ref{lem-d} below). Let $\widehat k_t(a)$
 be the same as $k_t(a)$ except that, when $t> 1 $ (resp. $t<1$) we restrict to 
 pairs of projections $P,Q$ such that $P\le Q$ (resp. $Q\le P$), and when $t=1$
 we restrict to pairs such that $P=Q$.
 Then
 $$\widehat k_t(a) \le   k_t(a) \le 2^{1/2}\widehat k_t(a).$$
 We merely indicate a quick  argument for $t>1$.
 Let $Q'=P \vee Q$. Then $P\le Q'$ and $\tau(Q')\le \tau(P)+\tau(Q)$.
 With the above notation we have
 $\left(\sum \|Pa_nQ\|^2_2\right)^{1/2}\le \left(\sum \|Pa_nQ'\|^2_2\right)^{1/2}$
 and also 
 $\tau(P)^{1/2}\vee t^{-1}\tau(Q')^{1/2} \le \tau(P)^{1/2}\vee  t^{-1} (\tau(P)+\tau(Q))^{1/2}\le (t^{-2}+1)^{1/2} (\tau(P)^{1/2}\vee   t^{-1}\tau(Q)^{1/2}$
 and since 
 $(t^{-2}+1)^{1/2}\le 2^{1/2}$ we obtain $k_t(a) \le 2^{1/2}\widehat k_t(a).$
 We leave the other cases to the reader. 

 \end{rk}
\begin{proof}[First part of the proof of Theorem \ref{thm1.1}]
Consider $a^0\in M(R),\ P,Q\in {\cl P}$. We have
\begin{align*}
\sum\|Pa^0_nQ\|^2_2 &= \sum\tau(Pa^0_nQa^{0*}_nP) \le \sum\tau(Pa^0_na^{0*}_nP)\\
&\le \left\|\sum a^0_na^{0*}_n\right\|\tau(P)\\
\intertext{and hence}
\left(\sum\|Pa^0_nQ\|^2_2\right)^{1/2} &\le \|a^0\|_{M(R)} \tau(P)^{1/2}.
\end{align*}
Similarly for any $a^1\in M(C)$ we have
\[
\left(\sum\|Pa^1_nQ\|^2_2\right)^{1/2} \le \|a^1\|_{M(C)} \tau(Q)^{1/2}.
\]
Therefore if $a = a^0+a^1$ we find by the triangle inequality
\begin{align*}
\left(\sum\|Pa_nQ\|^2_2\right)^{1/2}&\le \|a^0\|_{M(R)} \tau(P)^{1/2} + \|a^1\|_{M(C)} \tau(Q)^{1/2}\\
&\le (\|a^0\|_{M(R)} + t\|a^1\|_{M(C)}) \max\{\tau(P)^{1/2}, t^{-1}\tau(Q)^{1/2}\}.
\end{align*}
So we obtain
\[
k_t(a) \le K_t(a; M(R), M(C)).
\] \end{proof}

To prove the converse we will use duality, via the following Lemma. 
\begin{lem}\label{lem-d}
Let $x\in M_*(R)\cap M_*(C)$ be such that
\[
J_t(x) \overset{\text{\rm def}}{=} \max\left\{\left\|\left(\sum x^*_n x_n\right)^{1/2}\right\|_1, \frac1t \left\|\left(\sum x_n x^*_n\right)^{1/2}\right\|_1\right\}\le 1.
\]
Let ${\cl C}_t$ be the subset of $M_*(R)\cap M_*(C)$ formed of all sequences $\chi=(\chi_n)$ of the form
\[
\chi_n = Qy_nP \cdot (\tau(P)+t^{-2}\tau(Q))^{-1/2}
\]
with $\sum\|y_n\|^2_2\le 1$ and where $P,Q$ are   commuting projections.
Then $x\in 2~\ovl{\text{\rm conv}}({\cl C}_t)$ where the closure is in $M_*(R)\cap M_*(C)$.
\end{lem}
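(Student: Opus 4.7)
The plan is a Hahn--Banach reduction followed by an appeal to the non-commutative Varopoulos-type lemma from \cite{P6}.

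First, assume for contradiction that $x \notin 2\,\overline{\mathrm{conv}}(\cl C_t)$. The set $\cl C_t$ is absolutely convex in $M_*(R)\cap M_*(C)$ (scalars of modulus one can be absorbed into the $y_n$'s), and the Banach space dual of $M_*(R)\cap M_*(C)$ is $M(R)+M(C)$ under the pairing $\langle u,a\rangle = \sum \tau(u_n a_n)$ recorded in Section~\ref{sec1-}. Hahn--Banach therefore produces $a = (a_n) \in M(R)+M(C)$ with
\[
\mathrm{Re}\,\langle x,a\rangle > 1 \qquad\text{and}\qquad |\langle \chi,a\rangle| \le 1/2 \ \text{ for every } \chi\in\cl C_t.
\]

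The second step is to convert this into a usable estimate on $a$. Fix commuting finite projections $P,Q\in\cl P$ and $(y_n)$ in the unit ball of $\ell_2(L_2(\tau))$. By cyclicity of $\tau$,
\[
\tau(Qy_nPa_n) = \tau(y_n Pa_nQ) = \langle y_n,\, Qa_n^* P\rangle_{L_2(\tau)},
\]
so testing against $\chi_n = (\tau(P)+t^{-2}\tau(Q))^{-1/2}Qy_nP$ and taking the supremum over admissible $(y_n)$ via $L_2$-duality yields
\[
\Bigl(\sum\|Pa_nQ\|_2^2\Bigr)^{1/2} \le \tfrac12\bigl(\tau(P)+t^{-2}\tau(Q)\bigr)^{1/2}
\]
for every pair of commuting finite projections $P,Q$.

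The decisive step, which I expect to be the main obstacle, is to upgrade this ``commuting-rectangle'' control of $a$ into an honest row/column decomposition
\[
a = a^0+a^1,\quad a^0\in M(R),\ a^1\in M(C),\quad \|a^0\|_{M(R)} + t\|a^1\|_{M(C)} \le 1.
\]
This is precisely the role of the non-commutative Varopoulos-type lemma of \cite{P6} that the introduction flags as the core tool; the restriction to commuting $P,Q$ in the definition of $\cl C_t$ is imposed exactly so that the hypothesis of that lemma (which is naturally formulated inside a common abelian von~Neumann subalgebra of $M$) is automatically satisfied. All the analytic substance of the proof is concentrated in this step.

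Granting the decomposition, one finishes by a one-line duality bound. Since $J_t(x)\le 1$ amounts to $\|x\|_{M_*(C)}\le 1$ and $\|x\|_{M_*(R)}\le t$,
\[
|\langle x,a\rangle| \le \|x\|_{M_*(C)}\|a^0\|_{M(R)} + \|x\|_{M_*(R)}\|a^1\|_{M(C)} \le \|a^0\|_{M(R)} + t\|a^1\|_{M(C)} \le 1,
\]
contradicting $\mathrm{Re}\,\langle x,a\rangle > 1$. Hence $x \in 2\,\overline{\mathrm{conv}}(\cl C_t)$, as claimed.
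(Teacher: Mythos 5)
Your Hahn--Banach reduction (steps~1, 2, and~4 in your plan) is sound, but the argument as a whole has a genuine gap at the step you yourself flag as ``the main obstacle,'' and that gap is not merely a matter of filling in details --- it inverts the logical structure of the paper. What you defer to ``the non-commutative Varopoulos-type lemma of \cite{P6}'' does not exist as a citable black box: \cite{P6} treats only the \emph{commutative} case (and the introduction and the proof of Theorem~\ref{thm2b} make this explicit --- \cite{P6} is invoked as a ``model,'' not as a non-commutative result). The non-commutative Varopoulos-type statement you need --- that control of $\bigl(\sum\|Pa_nQ\|_2^2\bigr)^{1/2}$ over pairs of projections yields a row/column decomposition of $a$ --- is precisely Theorem~\ref{thm1.1}, and the paper proves Theorem~\ref{thm1.1} \emph{from} Lemma~\ref{lem-d} (via the duality computation in ``End of the proof of Theorem~\ref{thm1.1}''). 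So as written, your argument for Lemma~\ref{lem-d} presupposes Theorem~\ref{thm1.1}, which in turn presupposes Lemma~\ref{lem-d}: it is circular.

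There is a secondary problem with constants even if one were granted some version of the decomposition. Your Hahn--Banach functional gives $\bigl(\sum\|Pa_nQ\|_2^2\bigr)^{1/2}\le\tfrac12(\tau(P)+t^{-2}\tau(Q))^{1/2}$ only for \emph{commuting} $P,Q$, which (by the Remark following Theorem~\ref{thm1.1}) is equivalent to the unrestricted bound only up to a factor $\sqrt2$; and Theorem~\ref{thm1.1} itself gives $K_t\le 2k_t$, not $K_t\le k_t$. Chasing constants, the bound you could hope to extract is $K_t(a)\le 2$ rather than $\le 1$, which does not contradict $\mathrm{Re}\,\langle x,a\rangle>1$. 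The paper avoids both problems by proving Lemma~\ref{lem-d} \emph{directly}: after reducing to finite sequences, it factors $x_n=ga_n=b_nf$ with $f=(\sum x_n^*x_n)^{1/2}$, $g=t(\sum x_nx_n^*)^{1/2}$, diagonalizes $f$ and $g$, applies the scalar elementary Lemma~\ref{elem} to $\varphi(i,j)=g_i\wedge f_j$, and assembles $x/2$ as an explicit convex combination of elements $\bigl(\varphi^{1/2}y_n\bigr)_n\in\mathcal C_t$. That constructive route is where all the analytic content lives, and it is exactly what your appeal to a non-commutative Varopoulos lemma quietly assumes.
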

We will need the following simple

\begin{lem}\label{elem}
Let $\varphi\colon \ [1,\ldots, N]^2\to {\bb R}_+$ be defined by $\varphi(i,j) = g(i)\wedge f(j)$ where $g\ge 0$ and $f\ge 0$ satisfy $\sum f(j)\le 1$ and $\sum g(i)\le t^2$. Then $\varphi \in \text{\rm conv}(\Phi)$ where $\Phi$ is the set of functions on $[1,\ldots, N]^2$ of the form
\begin{equation}\label{re-eq3}
\frac{1_{E\times F}}{\frac1{t^2} |E| + |F|}
\end{equation}
where $E,F\subset [1,\ldots, N]^2$ are arbitrary subsets.
\end{lem}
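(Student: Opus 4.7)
The plan is to exploit the layer-cake representation of the minimum together with the observation that level sets of $g$ and $f$ automatically produce the rectangular shapes $E \times F$ required by $\Phi$.

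I would start from the pointwise identity
$$g(i) \wedge f(j) \;=\; \int_0^\infty 1_{\{g(i)>s\}}\,1_{\{f(j)>s\}} \, ds,$$
valid for $g,f \ge 0$. Setting $E_s = \{i : g(i) > s\}$ and $F_s = \{j : f(j) > s\}$ (subsets of $[1,\ldots,N]$; I take the ``$[1,\ldots,N]^2$'' in the definition of $\Phi$ to be a typo), the product of indicators becomes $1_{E_s \times F_s}(i,j)$, giving
$$\varphi(i,j) \;=\; \int_0^\infty 1_{E_s\times F_s}(i,j)\,ds \;=\; \int_0^\infty \bigl(t^{-2}|E_s|+|F_s|\bigr)\,\frac{1_{E_s\times F_s}(i,j)}{t^{-2}|E_s|+|F_s|}\, ds.$$
For those $s$ at which $E_s$ or $F_s$ is empty the integrand simply vanishes and may be discarded; otherwise the fractional factor is by definition an element of $\Phi$. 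Since $g$ and $f$ take only finitely many values, this integral collapses to a finite nonnegative combination of elements of $\Phi$, so the shape of the decomposition is as required.

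The remaining step is to control the total weight. By the reverse layer-cake identity,
$$\int_0^\infty \bigl(t^{-2}|E_s| + |F_s|\bigr)\,ds \;=\; t^{-2}\sum_i g(i) \;+\; \sum_j f(j) \;\le\; 1 + 1 \;=\; 2,$$
using the hypotheses $\sum_i g(i)\le t^2$ and $\sum_j f(j)\le 1$. Thus $\varphi$ is a nonnegative combination of elements of $\Phi$ with total mass at most $2$; in other words $\varphi \in 2\,\mathrm{conv}(\Phi)$. I read the statement as allowing this factor $2$, which is precisely the extra factor appearing in the conclusion $x \in 2\,\overline{\mathrm{conv}}(\mathcal{C}_t)$ of Lemma~\ref{lem-d}.

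No serious obstacle is expected. The only delicate bookkeeping points are the treatment of empty level sets (harmless, since they contribute $0$ to both the representation and the weight) and the reduction from an integral to a finite sum, which is immediate because $g$ and $f$ only take finitely many values. The key conceptual input is simply that a minimum is expressible as a layered sum of rectangular indicators, with rectangles of exactly the kind that appear in $\Phi$.
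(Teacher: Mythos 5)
Your argument is exactly the paper's proof: write $\varphi = \int_0^\infty 1_{\{g>c\}\times\{f>c\}}\,dc = \int_0^\infty m(c)\,\frac{1_{\{g>c\}\times\{f>c\}}}{m(c)}\,dc$ with $m(c)=t^{-2}|\{g>c\}|+|\{f>c\}|$, and observe $\int_0^\infty m(c)\,dc = t^{-2}\sum g(i)+\sum f(j)\le 2$. You also correctly diagnosed the two slips in the statement — $E,F$ should be subsets of $[1,\ldots,N]$, and the conclusion should read $\varphi\in 2\,\mathrm{conv}(\Phi)$, which is what the proof (and the paper's own subsequent measure-space remark) actually yields and what is needed for the factor $2$ in Lemma~\ref{lem-d}.
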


\begin{proof}
We may write
\begin{align*}
\varphi &= \int^\infty_0 1_{\{\varphi>c\}} dc = \int^\infty_0 1_{\{g>c\}\times \{f>c\}} dc\\
&= \int^\infty_0 m(c) \frac{1_{\{g>c\}\times \{f>c\}}}{m(c)}dc
\end{align*}
where $m(c) = \frac1{t^2}|\{ g>c\}| + |\{f>c\}|$. But since $\int^\infty_0 m(c)dc = \frac1{t^2} \sum g(i)+ \sum f(j)\le 2$ the Lemma follows.
\end{proof}

\begin{rk}
A simple verification shows that if $\varphi\in \Phi$ is of the form \eqref{re-eq3}, we have $\sup_j \varphi(i,j) = 1_{\{i\in E\}}(t^{-2} |E|+|F|)^{-1}$ and $\sup_i\varphi(i,j) = 1_{\{j\in F\}}(t^{-2}|E|+|F|)^{-1}$. Therefore we find
\begin{equation}\label{re-eq4}
t^{-2} \sum\nolimits_i \sup\nolimits_j \varphi(i,j) + \sum\nolimits_j \sup\nolimits_i \varphi(i,j) \le 1.
\end{equation}
\end{rk}

\begin{rk}
Let $(\Omega,\mu)$ and $(\Omega',\mu')$ be measure spaces. Let $f\colon \ \Omega'\to {\bb R}_+$ and $g\colon  \ \Omega\to {\bb R}_+$ be   step functions. Assume that $\int f\ d\mu'\le 1$ and $\int g\ d\mu \le t^2$ $(t>0)$. Let $\varphi(\omega,\omega') = g(\omega)\wedge f(\omega')$ on $\Omega\times\Omega'$. Then $\varphi\in  2\text{conv}(\Phi)$ where $\Phi$ is the set of functions of the form
\[
\varphi = \frac{1_{E\times F}}{t^{-2}\mu(E) + \mu'(F)},
\]
 where $E\subset\Omega$ and $F\subset\Omega'$ are arbitrary measurable subsets. 
 \end{rk}

\begin{proof}[Proof of Lemma \ref{lem-d}]
As is well known, if we truncate the sequence $(x_n)$ and replace it by $x(N)$ defined by $x_n(N) = x_n1_{\{n\le N\}}$, then $\|x(N)-x\|_{M_*(R)}\to 0$ and similarly for $M_*(C)$. 
Indeed, since for all $N\le m$ the norms in $M_*(R)$ and $M_*(R)$ both satisfy
$$\| x(N)-x(m)\|^2 + \|x(N)\|^2\le \|x(m)\|^2$$
and since $\|x(m)\|  \to \|x\|$, this fact follows easily.\\
Thus it suffices to prove the Lemma for finite sequences $x = (x_1,\ldots, x_N)$. Let us first assume that $M=B(\ell_2)$, $M_*=S_1$ (trace class) and $\tau$ is the ordinary trace on $S_1$.  Assume $J_t(x)<1$. Let $f = (\sum x^*_nx_n)^{1/2}$ and $g = t(\sum x_nx^*_n)^{1/2}$. We have ${\rm tr}(f)<1$, ${\rm tr}(g)<t^2$ and moreover if $a_n=g^{-1}x_n$, $b_n=x_nf^{-1}$ then $\sum a_na^*_n = g^{-1}t^{-2}g^2g^{-1} = t^{-2}$, and similarly $\sum b^*_nb_n=1$, so we have
\begin{equation}\label{re-eq1}
\left\|\left(\sum a_na^*_n\right)^{1/2}\right\|\le t^{-1}, \quad \left\|\sum b^*_nb_n\right\|^{1/2}\le 1,
\end{equation}
with $x_n=ga_n =b _nf$.

Note that by a simple perturbation argument we may assume $f>0$ and $g>0$ so that $f$ and $g$ are invertible. 

Let us now consider the matrix representation of $x_n$ with respect to the bases that diagonalize respectively $f$ (for the column index) and $g$ (for the row index). We have then
\[
x_n(i,j) = g_ia_n(i,j) = b_n(i,j)f_j.
\]
Moreover we know from \eqref{re-eq1} that $\forall\xi\in \ell_2$
\[
\sum \|a^*_n\xi\|^2 \le t^{-2}\|\xi\|^2\quad \text{and}\quad \sum\nolimits\|b_n\xi\|^2 \le \|\xi\|^2
\]
and hence taking for $\xi$ either the $i$-th or the $j$-th basis vector we find
\begin{equation}\label{re-eq2}
\sup_i \sum\nolimits_n \sum\nolimits_j|a_n(i,j)|^2\le t^{-2}\quad \text{and}\quad \sup_j \sum\nolimits_n \sum\nolimits_i |b_n(i,j)|^2\le  1.
\end{equation}
Let $\gamma_n(i,j) = \frac1{g_i\wedge f_j} (x_n(i,j))$. Then $|\gamma_n(i,j)| \le \max\{|a_n(i,j)|,| b_n(i,j)|\} \le |a_n(i,j)| + |b_n(i,j)|$ and
$x_n(i,j) = (g_i\wedge f_j) \gamma_n(i,j)$.
By Lemma \ref{elem},
 since $x_n(i,j) = g_i\wedge f_j$ $\gamma_n(i,j)$, we know that $x/2$ is in the convex hull of
\[
(\varphi(i,j)\gamma_n(i,j))_n = (\varphi(i,j)^{1/2} \varphi(i,j)^{1/2} \gamma_n(i,j))_n
\]
where $\varphi\in\Phi$. We then note that if
\[
y_n(i,j) = \varphi(i,j)^{1/2} \gamma_n(i,j)
\]
then using \eqref{re-eq2} and \eqref{re-eq4} we have since $|\gamma_n(i,j)| \le |a_n(i,j)| \vee |b_n(i,j)|$
\begin{align*}
\sum\nolimits_n \|y_n\|^2_2 &= \sum\nolimits_{n,i,j} |\xi_n(i,j)|^2\\
&\le \sum\nolimits_n \sum\nolimits_{ij} \varphi(i,j)(|a_n(i,j)|^2 + |b_n(i,j)|^2)\\
&\le \sum\nolimits_i \sup\nolimits_j\varphi(i,j) t^{-2} + \sum\nolimits_j \sup\nolimits_i\varphi(i,j)\\
&\le 1.
\end{align*}
Thus we find that $x/2$ can be written as a convex combination of elements of the form
\[
(\varphi(i,j)^{1/2} y_n)_n
\]
with $\varphi\in \Phi$ and $\sum\|y_n\|^2_2\le 1$. Let $Q,P$ be the projections associated to $1_E$ and $1_F$. Then
\[
[\varphi(i,j)^{1/2} y_n] = (t^{-2} \text{ tr } Q + \text{ tr } P)^{-1/2}Qy_nP.
\]
 This completes the proof  in the case $M=B(H)$.
The case of a general semi-finite von~Neumann algebra $M\subset B(H)$ can easily be reduced (by density) to the case when $M$ is finite.
 %We can also assume $H$ separable (on $M$ countably generated).
  In that case, the densities $f$ and $g$ in the preceding argument can be replaced by $f_\vp = f+\vp1$ and $g_\vp = g+\vp1$ in order to obtain $f,g$ invertible. We are thus left with a finite sequence $x_1,\ldots, x_N$ in $M$ and $f,g\ge 0$ in $M_*$ invertible such that $\tau(f)<1$ and $\tau(g)<t^2$ and moreover
\begin{equation}\label{re-eq5}
x_n = ga_n = b_nf\qquad (n\le N) 
\end{equation}
with $a_n,b_n\in M$ such that \eqref{re-eq1} holds. Just like we do for functions and step functions we may approximate $f,g$ by elements of the form $\sum^k_1 f_jP_j$ and $\sum^k_1 g_iQ_i$ where $(P_i)$ (resp.\ $Q_j)$) are orthogonal projections in $M$ with $\sum P_i = \sum Q_j=1$. This modification leads by \eqref{re-eq5} to a perturbation of $x_n$ so it suffices to complete the proof for this special case. If we then denote $x_n(i,j) = P_i x_n Q_j$, and similarly for $a_n$ and $b_n$ we can essentially repeat the preceding argument   using the remark following the above Lemma with the measures $\mu = \sum g_j\tau(Q_j)\delta_j$ and $\mu' = \sum f_i\tau(P_i)\delta_i$.
\end{proof}

 \begin{proof}[End of the proof of Theorem \ref{thm1.1}]
Assume $k_t(a)\le 1$, so that $\forall P,Q\in {\cl P}$
\[
\sum\|Pa_nQ\|^2_2 \le {\tau}(P) \vee t^{-2} {\tau} (Q).
\]
This implies by Cauchy--Schwarz
\begin{align*}
\left|\sum {\tau}(Pa_nQy_n)\right| &\le ({\tau}(P)\vee t^{-2} {\tau}(Q))^{1/2} \left(\sum\|y_n\|^2_2\right)^{1/2}\\
&\le (t^{-2} {\tau}(Q) + {\tau}(P))^{1/2} \left(\sum\|y_n\|^2_2\right)^{1/2} 
\end{align*}
and hence by Lemma \ref{lem-d}
\[
\left|\sum {\tau}(a_nx_n)\right| \le 2J_t(x),
\]
then by duality we conclude
\[
K_t(a; M(R), M(C)) \le 2.
\]
\end{proof}

\begin{rk}
The preceding proof reveals the following slightly surprising fact: \ Consider a sequence $x=(x_n)$ of operators $x_n\in B(\ell_2)$, with say $H=\ell_2$. Assume that for any pair of orthonormal basis $(e_i)$ $(f_j)$ in $H$ the matrix $a_{ij} = \langle e_i, x_nf_j\rangle$ belongs to $\ell_\infty(i; \ell_2(n,j)) + \ell_\infty(j;\ell_2(n,i))$. Then $x=(x_n)\in M(R)+ M(C)$ with $M=B(\ell_2)$. Indeed, if $E$ (resp.\ $F$) is a finite subset of ${\bb N}$, and if $P$ (resp.\ $Q$) is the orthogonal projection into $\text{span}(E)$ (resp.\ $\text{span}(F)$), then
\[
\left(\sum \|Pa_nQ\|^2\right)^{1/2} (\max(\tau(P)^{1/2}, \tau(Q)^{1/2}))^{-1}\\
= \left(\sum\nolimits_{E\times F} \sum\nolimits|a_n(i,j)|^2 \max\{|E|, |F|\}^{-1}\right)^{1/2}.
\]
So Theorem~\ref{thm1.1} (compare with the Varopoulos Lemma in \cite{Va}) implies the preceding fact. Moreover the converse implication also holds since $x\in M(R)$ (resp.\ $M(C)$) implies $a\in\ell_\infty(i;\ell_2(n,j))$ (resp.\ $\ell_\infty(j; \ell_2(n,i)$).
\end{rk}

\begin{rk}
Let $N\subset M$ be a von Neumann subalgebra such that $\tau_{|N}$ is still semi-finite, so that the conditional expectation ${\bb E}\colon\ M\to N$ is well defined. It is easy to check that the main result remains valid if we replace the norms of $M(R)$ (resp.\ $M(C)$) by their conditional versions:
\[
\left\|\sum {\bb E}(x_nx^*_n)\right\|^{1/2} \quad \left(\text{resp.}\left\|\sum {\bb E}( x^*_nx_n)\right\|^{1/2}\right).
\]
The formula for $k_t$ now has to be modified by restricting $p,q$ to lie in $N$.
\end{rk}

To put the next corollary in proper perspective, recall that, according to \cite{P3}, the elements $a=(a_n)$ in the complex interpolation space $(M(R),M(C))^\theta$ are precisely those such that the operator
\[
T_a\colon \ x\mapsto \sum a_nxa^*_n 
\]
is bounded on $L_p(\tau)$, where $p=1/\theta$.

 In the commutative case, a bounded operator $T\colon \ L_p(\Omega_1,\mu_1)\to L_p(\Omega_2,\mu_2)$ is called of strong type $p$, and the classical Riesz interpolation theorem says that, in the complex case, if $1\le p_0<p_1\le \infty$ and $T$ is of strong type $p_j$ for $j=0,1$ then $T$ is of strong type $p$ for any  intermediate $p$ such that $p_0<p<p_1$. The latter theorem is the founding result for the ``complex interpolation method'' (see \cite{BL}), while the classical Marcinkiewicz theorem is the basis for the ``real method''. In that context, an operator that is bounded from $L_{p,1}(\mu_1)$ to $L_{p,\infty}(\mu_2)$ is called of very weak type $p$. The generalized version of Marcinkiewicz theorem then says that, if $T$ is of very weak type $p_j$ for $j=0,1$, then $T$ is of strong type $p$ for any $p_0<p<p_1$. 

Let $(\Omega,\mu)$ be a measure space. Recall that the ``weak $L_p$'' space $L_{p,\infty}(\mu)$ is formed of all measurable functions $f\colon \ \Omega\to {\bb R}$ such that
\[
\|f\|_{p,\infty} = \sup_{c>0} (c^p\mu\{|f|>c\})^{1/p} < \infty.
\]
When $p>1$, the quasi-norm $\|\cdot\|_{p,\infty}$ is equivalent to the following norm
\begin{equation}\label{99}
\|f\|_{[p,\infty]} = \sup\left\{\int_E |f| \frac{d\mu}{\mu(E)^{1/p'}}\ \Big|\ E\subset \Omega\right\}.
\end{equation}
When $p>1$,  $L_{p',\infty}(\mu)$ is the dual of the ``Lorentz space'' $L_{p,1}(\mu)$ that can be defined
(see e.g. \cite{BL}) as formed of those $f$ such that
\begin{equation}\label{100}
[f]_{p,1} = \int^\infty_0 \mu\{|f|>c\}^{1/p} \ dc < \infty.
\end{equation}
Using the generalized $s$-numbers from \cite{FK}, it is easy to define
the spaces  $L_{p,\infty}(\tau)$ and $L_{p,1}(\tau)$ (or more generally  $L_{p,q}(\tau)$ for $1\le q\le \infty$)
associated to $(M,\tau)$. The simplest way to describe those is  as follows. Given a $\tau$-measurable
operator $x$ (in the sense of \cite{FK}), let $M_{|x|}\subset M$ denote the von Neumann subalgebra 
generated by the spectral projections of  $|x|$. Then $M_{|x|}\simeq L_\infty(\Omega_{|x|},\mu_{|x|})$
for some measure space $(\Omega_{|x|},\mu_{|x|}) $  in such a way that the restriction of $\tau$ to $M_{|x|}$ coincides with $\mu_{|x|}$ in this identification. Moreover the  space 
of scalar valued measurable functions (that are bounded outside a set of finite measure) $ L_0(\Omega_{|x|},\mu_{|x|})$ can be identified
with that of $\tau$-measurable operators affiliated with $M_{|x|}$.
The space $L_{p,\infty}(\tau)$   (resp.  $L_{p,q}(\tau)$ for $1\le q\le \infty$) is then formed
of those $x$ such that, in the latter identification, $|x|\in L_{p,\infty}(\mu_{|x|})$ (resp.  $L_{p,q}(\mu_{|x|})$.
The duality extends to this setting: we have $L_{p',\infty}(\tau)=L_{p,1}(\tau)^*$ for any $1\le p< \infty$, see \cite{FK,PX}.

The following two statements appear as analogues for real interpolation of the complex case already treated in \cite{P0,P3}. 

\begin{cor}\label{cor1}
Let $0<\theta<1$. An element $a = (a_n)$ $(x_n\in M)$ belongs to the space $(M(R),M(C))_{\theta,\infty}$ iff the mapping
\[
T_\alpha\colon \ x\to \sum a_nx a^*_n
\]
is bounded from $L_{p,1}(\tau)$ to $L_{p,\infty}(\tau)$ where $\frac1p = \frac{1-\theta}\infty +\frac\theta1$, i.e.\ $p=1/\theta$. Moreover, the norm in $(M(R),M(C))_{\theta,\infty}$ is equivalent to  $a\mapsto \|T_a\colon \ L_{p,1}(\tau)\to L_{p,\infty}(\tau)\|^{1/2}$.
\end{cor}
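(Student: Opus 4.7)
The plan is to combine Theorem \ref{thm1.1} with an elementary maximization in $t$ to reduce $\|a\|_{(M(R),M(C))_{\theta,\infty}}^{2}$ to a supremum over projection pairs, and then to recognize that supremum as the operator norm $\|T_a\|_{L_{p,1}(\tau)\to L_{p,\infty}(\tau)}$. Since $q=\infty$, by definition $\|a\|_{(M(R),M(C))_{\theta,\infty}}=\sup_{t>0}t^{-\theta}K_t(a;M(R),M(C))$, and Theorem \ref{thm1.1} gives the equivalent expression
$$\|a\|_{(M(R),M(C))_{\theta,\infty}}\asymp\sup_{t>0}t^{-\theta}k_t(a).$$

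For fixed $P,Q\in{\cl P}$ of finite trace, set $\alpha=\tau(P)^{1/2}$, $\beta=\tau(Q)^{1/2}$. An elementary calculation shows that $t\mapsto t^{-\theta}\max\{\alpha,t^{-1}\beta\}^{-1}$ attains its maximum at $t=\beta/\alpha$ with value $\alpha^{\theta-1}\beta^{-\theta}$. Substituting $\theta=1/p$, $1-\theta=1/p'$, squaring, and taking a supremum in $(P,Q)$ gives
$$\|a\|^{2}_{(M(R),M(C))_{\theta,\infty}}\asymp\sup_{P,Q\in{\cl P}}\frac{\sum_{n}\|Pa_nQ\|_2^{2}}{\tau(P)^{1/p'}\,\tau(Q)^{1/p}}.$$
The trace identity $\sum_{n}\|Pa_nQ\|_2^{2}=\sum_{n}\tau(Pa_nQa_n^{*})=\tau\bigl(P\,T_a(Q)\bigr)$ rewrites the numerator in terms of $T_a$.

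The next step is to identify this last supremum with $\|T_a\|_{L_{p,1}\to L_{p,\infty}}$. For a finite-trace projection $Q$, formula \eqref{100} gives $\|Q\|_{L_{p,1}(\tau)}=\tau(Q)^{1/p}$, and since $T_a(Q)\ge 0$, the equivalent norm \eqref{99} yields $\|T_a(Q)\|_{L_{p,\infty}}\asymp\sup_{P}\tau(P\,T_a(Q))/\tau(P)^{1/p'}$; testing on projections therefore already gives $\|T_a\|_{L_{p,1}\to L_{p,\infty}}\gtrsim\sup_{P,Q}\tau(P\,T_a(Q))/(\tau(P)^{1/p'}\tau(Q)^{1/p})$. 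For the reverse, one uses complete positivity of $T_a$ to reduce to positive $x\in L_{p,1}(\tau)$, then decomposes such an $x$ spectrally as $x=\int_{0}^{\infty}1_{\{x>c\}}\,dc$ with $\|x\|_{L_{p,1}}=\int_{0}^{\infty}\tau\{x>c\}^{1/p}\,dc$; since $\|\cdot\|_{L_{p,\infty}}$ is equivalent to a norm on positive operators, the triangle inequality integrates pointwise in $c$ to produce the matching upper bound. Reduction from arbitrary $x$ to positive $x$ is handled by splitting into real/imaginary and positive/negative parts, paying only $p$-dependent constants.

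The main technical hurdle I anticipate is not conceptual but a bookkeeping one: making the atomic (layer-cake) decomposition of positive elements of $L_{p,1}(\tau)$ and the equivalent norm \eqref{99} on $L_{p,\infty}(\tau)$ rigorous for $\tau$-measurable operators via the generalized $s$-numbers of \cite{FK}, and verifying that the successive reductions (to positive $x$, to spectral projections, and back) absorb only constants depending on $p$. Once those are in place, the chain of equivalences collapses to $\|a\|_{(M(R),M(C))_{\theta,\infty}}\asymp\|T_a\|_{L_{p,1}\to L_{p,\infty}}^{1/2}$, which yields both the claimed characterization and the norm equivalence.
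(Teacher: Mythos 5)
Your proof is correct and follows essentially the same route as the paper: reduce $\sup_t t^{-\theta}k_t(a)$ to $\sup_{P,Q}\sum\|Pa_nQ\|_2^2/(\tau(P)^{1-\theta}\tau(Q)^\theta)$, rewrite via $\tau(P\,T_a(Q))$, and then pass from projections to the full unit balls via the atomic (layer-cake) decomposition of positive elements of $L_{p,1}(\tau)$. The only cosmetic differences are that you maximize over $t$ directly rather than via the identity \eqref{re-eq10a}, and you handle the $L_{p,\infty}$ side through the equivalent norm \eqref{99} rather than (as the paper does) by dualizing $T_a(x)$ against $y\in B_{L_{p',1}(\tau)}$ and treating both variables symmetrically.
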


\begin{proof}
Recall
\begin{equation}\label{re-eq10a}
\forall a_0,a_1>0\qquad\qquad a^{1-\theta}_0a^\theta_1 = \inf_{t>0} \{(1-\theta) a_0t^\theta + \theta a_1t^{\theta-1}\}.
\end{equation}
By definition
\[
\|a\|_{\theta,\infty} = \sup\nolimits_{t>0} t^{-\theta} K_t(a; M(R),M(C)).
\]
By   Theorem \ref{thm1.1} this is equivalent to $\sup_{t>0} t^{-\theta}k_t(a)$. Note that $(1-\theta)\xi + \theta\eta \le \max(\xi,\eta) \le \max\{\theta^{-1},(1-\theta)^{-1}\}( (1-\theta)\xi +\theta\eta)$ $\forall\xi,\eta>0$. Therefore
\[
\sup\nolimits_{t>0} t^{-\theta}(\max(\tau(P)^{1/2}, t^{-1}\tau(Q)^{1/2}))^{-1}
\]
is equivalent to 
\[
(\inf\nolimits_{t>0} (1-\theta)t^{2\theta} \tau(P) + \theta t^{2\theta-2} \tau(Q))^{-1/2}
\]
or equivalently (by \eqref{re-eq10a}) to $\inf_{s>0}((1-\theta) s^\theta \tau(P) + \theta s^{\theta-1} \tau(Q))^{-1/2}=(\tau(P)^{1-\theta} \tau(Q)^\theta)^{-1/2}$. Thus we find that $\|a\|^2_{\theta,\infty}$ is equivalent to
\begin{align*}
&\sup\left\{\sum \|Pa_nQ\|^2_2 (\tau(P)^{1-\theta} \tau(Q)^\theta)^{-1}\mid P,Q\in {\cl P}\right\}\\
= &\sup\left\{\sum \tau(Pa_nQa^*_n) (\tau(P)^{1-\theta} \tau(Q)^\theta)^{-1} \mid P,Q\in {\cl P}\right\}\\
= &\sup\{\langle T_a(Q\tau(Q)^{-\theta}) , P\tau(P)^{-(1-\theta)}\rangle \mid P,Q\in {\cl P}\}.
\end{align*}
This last expression is equivalent to
\[
\sup\{\langle T_a(x),y\rangle \mid x\in B_{L_{p,1}(\tau)}, y\in B_{L_{p',1}(\tau)}\}.
\]
Indeed, using convex combinations of elements of the form $Q\tau(Q)^{-\theta}$ we obtain the case of $x\ge 0$ in $B_{L_{p,1}(\tau)}$; then the decomposition $x=x_1-x_2+i(x_3-x_4)$ yields the general case up to a factor 4. The same reasoning applies to $y$. So we conclude that $\|a\|^2_{\theta,\infty}$ is equivalent to $\|T_a\colon \ L_{p,1}(\tau)\to L_{p,\infty}(\tau)\|$.
\end{proof}
\begin{rk}
In the particular  when $M=B(\ell_2)$ or $M_n$ with $n$ arbitrary, the preceding corollary yields a  description of the operator space structure of   $(R,C)_{\theta,\infty}$ according to Xu's definition in \cite{Xu}.
\end{rk}
 Let $1<p\le\infty$. When $p=\infty$, by convention we identify $L_{p,\infty}(\tau)$ with $M$.
Let $M(R;p,\infty)$ denote the space of sequences $a=(a_n)$ with $a_n\in L_{p,\infty}(\tau)$ such that $(\sum a_na^*_n)^{1/2}\in L_{p,\infty}(\tau)$, equipped with the ``norm'' $\|a\| = \|(\sum a_na^*_n)^{1/2}\|_{p,\infty}$. Similarly, we define $M(C;p,\infty) = \{a=(a_n)\mid (a^*_n)\in M(R;p,\infty)\}$ with $\|a\|_{M(C;p,\infty)} = \|(\sum a^*_na_n)^{1/2}\|_{p,\infty}$.

Using a simple non-commutative adaptation of the results in \cite{P6} along the lines of the proof of Theorem~\ref{thm1.1}, we find

\begin{thm}\label{thm2b}
Let $2<p_0,p_1\le\infty$. Let $a=(a_n)$ be a sequence with $a_n\in L_{p_0,\infty}(\tau) + L_{p_1,\infty}(\tau)$ for all $n$. To abbreviate, we set
\begin{align*}
K_t(a) &= K_t(a; M(R;p_0,\infty), M(C; p_1,\infty)),\quad \text{and}\\
k_t(a) &= \sup\left\{\left(\sum\|Pa_nQ\|^2_2\right)^{1/2} \max\{\tau(P)^{\alpha_0}, t^{-1}\tau(Q)^{\alpha_1}\}^{-1} \mid P,Q\in {\cl P}\right\}
\end{align*}
where $\alpha_0 = \frac12-\frac1{p_0}, \alpha_1 = \frac12 - \frac1{p_1}$. Then there are positive constants $c$ and $C$ (depending only on $p_0,p_1$) such that
\begin{equation}
ck_t(a) \le K_t(a) \le Ck_t(a).\tag*{$\forall t>0$}
\end{equation}
\end{thm}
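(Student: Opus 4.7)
My plan is to mirror the two-part structure of Theorem \ref{thm1.1}: a direct H\"older-type estimate for the inequality $k_t(a)\le CK_t(a)$, and a dual decomposition generalising Lemma \ref{lem-d} for the reverse bound. For the lower bound, fix any decomposition $a=a^0+a^1$ with $a^0\in M(R;p_0,\infty)$ and $a^1\in M(C;p_1,\infty)$ and repeat the opening computation of Theorem \ref{thm1.1}:
\[
\sum\|Pa_n^0Q\|_2^2\le \tau(PSP), \qquad S=\sum a_n^0a_n^{0*}\in L_{p_0/2,\infty}(\tau).
\]
Since $p_0>2$, the duality $L_{(p_0/2)',1}(\tau)^*=L_{p_0/2,\infty}(\tau)$ together with the elementary evaluation $\|P\|_{L_{(p_0/2)',1}(\tau)}\asymp \tau(P)^{1/(p_0/2)'}=\tau(P)^{2\alpha_0}$ yields
\[
\Bigl(\sum\|Pa_n^0Q\|_2^2\Bigr)^{1/2}\le C\,\|a^0\|_{M(R;p_0,\infty)}\,\tau(P)^{\alpha_0},
\]
and symmetrically $\tau(Q)^{\alpha_1}$ for $a^1$. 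Adding the two bounds with weights $1$ and $t$ and optimising exactly as in Theorem \ref{thm1.1} gives $k_t(a)\le cK_t(a)$.

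For the upper bound I would argue by duality via an analogue of Lemma \ref{lem-d}. The relevant predual pair is $(M_*(R;p_0',1),M_*(C;p_1',1))$, and what is required is the following: any $x$ in their intersection such that
\[
\max\bigl\{\|(\textstyle\sum x_n^*x_n)^{1/2}\|_{L_{p_1',1}(\tau)},\ t^{-1}\|(\textstyle\sum x_nx_n^*)^{1/2}\|_{L_{p_0',1}(\tau)}\bigr\}\le 1
\]
belongs to a fixed multiple of the closed convex hull of atoms of the form
\[
\chi_n=Qy_nP\,\bigl(\tau(P)^{2\alpha_0}+t^{-2}\tau(Q)^{2\alpha_1}\bigr)^{-1/2},\qquad \sum\|y_n\|_2^2\le 1,
\]
with $P,Q$ commuting projections. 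The derivation would follow the scheme of Lemma \ref{lem-d}: truncate to finite $N$ and reduce by density to $M$ finite, use the factorisations $x_n=ga_n=b_nf$ with $f=(\sum x_n^*x_n)^{1/2}$ and $g=t(\sum x_nx_n^*)^{1/2}$, diagonalise $f$ and $g$ to pass to the associated matrix picture, and there invoke an analogue of Lemma \ref{elem} that expresses the relevant product function as a convex combination of indicators $1_{E\times F}$ normalised by $(t^{-2}\mu(E)^{2\alpha_0}+\mu'(F)^{2\alpha_1})^{1/2}$.

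The hard part is exactly this last scalar decomposition lemma: it is a Varopoulos-type statement with Lorentz (rather than $L^1$) constraints on the marginals, precisely of the flavour of the auxiliary results in \cite{P6} that must be adapted to the exponents $\alpha_0,\alpha_1$. Once this combinatorial step is in place, the closing Cauchy--Schwarz pairing of atoms against $a$ followed by duality, as at the end of the proof of Theorem \ref{thm1.1}, delivers $K_t(a)\le C\,k_t(a)$.
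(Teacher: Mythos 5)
Your proposal matches the paper's own sketch: the direct inequality rests on the same equivalent expression for the $L_{p,\infty}$-norm (the paper derives it from \eqref{99} to get $\|x\|_{M(R;p_0,\infty)}\simeq\sup_P\tau(\sum x_nx_n^*P)^{1/2}\tau(P)^{-\alpha_0}$; you phrase it as $L_{(p_0/2)',1}$--$L_{p_0/2,\infty}$ duality, which amounts to the same estimate), and the reverse inequality is likewise deferred to a Lorentz version of Lemma~\ref{lem-d} modelled on the commutative results of \cite{P6}. One notational slip: under the pairing $\langle a,x\rangle=\sum\tau(a_nx_n)$ the predual of $M(R;p_0,\infty)$ is the \emph{column} space with exponent $p_0'$, so your $J_t$-condition should read $\max\{\|(\sum x_n^*x_n)^{1/2}\|_{L_{p_0',1}},\,t^{-1}\|(\sum x_nx_n^*)^{1/2}\|_{L_{p_1',1}}\}\le1$ (you have $p_0'$ and $p_1'$ interchanged), although your atom normalisation $(\tau(P)^{2\alpha_0}+t^{-2}\tau(Q)^{2\alpha_1})^{-1/2}$ is the correct one and consistent with $k_t$.
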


\begin{proof}
Let $(\Omega,\mu)$ be any measure space. For any measurable $f\colon \ \Omega\to {\bb R}_+$ 
we have obviously $\|f\|^p_{p,\infty} = \|f^2\|^{p/2}_{p/2,\infty}$.
 Therefore, using \eqref{99} $\|f\|_{p,\infty}$ is equivalent to 
\[
\sup\limits_{E\in\Omega} (\int_E |f|^2\ d\mu)^{1/2} \mu(E)^{\frac1p-\frac12}.
\]
 Thus we may use on $M(R;p_0,\infty)$ the following equivalent norm
\[
\sup\left\{\left(\tau\left(\sum x_nx^*_nP\right)\right)^{1/2} \tau(P)^{-\alpha_0}\mid P\in {\cl P}\right\}.
\]
Similarly, we may equip $M(C;p_1,\infty)$ with the equivalent norm
\[
\sup\left\{\left(\tau \left(\sum x^*_nx_nQ\right)\right)^{1/2} \tau(Q)^{-\alpha_1}\mid Q\in{\cl P}\right\}.
\]
Using these norms we find $k_t(a) \le K_t(a)$ by the same reasoning as for Theorem~\ref{thm1.1}. To prove the converse, we use duality again and mimic the proof of Theorem~\ref{thm1.1} using as a model the results presented in \cite{P6} for the commutative case.
\end{proof}

\begin{cor}\label{cor6}
Consider $2<p_0,p_1 \le\infty$ and $0<\theta<1$. Let $a=(a_n)_n$ be a sequence in $L_{p_\theta,\infty}(\tau)$ where $\frac1{p_\theta} = \frac{1-\theta}{p_0} + \frac\theta{p_1}$. Then $a=(a_n)$ belongs to the space $(M(R;p_0,\infty), M(C;p_1,\infty))_{\theta,\infty}$ iff the operator $T_a$ is bounded from $L_{r,1}(\tau)$ to $L_{s,\infty}(\tau)$ where $r,s$ are determined by $\frac{1-\theta}\infty + \frac\theta{p'_1} =\frac1r$ (i.e.\ $r=p'_1/\theta$) and $\frac{1-\theta}{p_0} + \frac\theta1 = \frac1s$ (i.e.\ $s=p_0 (1-\theta+\theta p_0)^{-1}$). Moreover the norm of $a$ in that space is equivalent to $\|T_a\colon \ L_{r,1}(\tau) \to L_{s,\infty}(\tau)\|^{1/2}$.
\end{cor}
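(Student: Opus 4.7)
The plan is to imitate the proof of Corollary \ref{cor1} verbatim, now using Theorem \ref{thm2b} in place of Theorem \ref{thm1.1}, with the exponents $\alpha_0 = \tfrac12 - \tfrac1{p_0}$, $\alpha_1 = \tfrac12 - \tfrac1{p_1}$ appearing throughout.

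First I would replace $K_t(a)$ by $k_t(a)$ via Theorem \ref{thm2b}, so that $\|a\|_{\theta,\infty}^2 \sim \sup_{t>0} t^{-2\theta} k_t(a)^2$. Interchanging the sup over $t$ with the sup over projections $(P,Q)$ (both sides being suprema of non-negative quantities), I would then compute the inner optimization. With $A = \tau(P)^{2\alpha_0}$ and $B = \tau(Q)^{2\alpha_1}$,
\[
\sup_{t>0} \frac{t^{-2\theta}}{\max(A,\, t^{-2}B)} = \frac{1}{A^{1-\theta}B^{\theta}} = \tau(P)^{-2\alpha_0(1-\theta)}\tau(Q)^{-2\alpha_1\theta},
\]
either directly (the sup is attained at $t^{2}=B/A$) or by way of the identity \eqref{re-eq10a}, applied exactly as in the proof of Corollary \ref{cor1}.

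Next, using $\sum \|Pa_nQ\|_2^2 = \tau(PT_a(Q))$ for $T_a(y) = \sum a_n y a_n^*$, this yields
\[
\|a\|_{\theta,\infty}^2 \sim \sup_{P,Q \in {\cl P}} \langle T_a(Q\tau(Q)^{-\beta_1}),\, P\tau(P)^{-\beta_0}\rangle
\]
with $\beta_1 = 2\alpha_1\theta$ and $\beta_0 = 2\alpha_0(1-\theta)$. I would recognize $Q\tau(Q)^{-\beta_1}$ as a norm-one element of $L_{r,1}(\tau)$ precisely when $1/r = \beta_1$, and similarly $P\tau(P)^{-\beta_0}$ as norm-one in $L_{s',1}(\tau)$ when $1/s' = \beta_0$; these equations fix the indices $r,s$ of the statement.

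To pass from the supremum over projections to the full operator norm I would invoke the density argument used in Corollary \ref{cor1}: convex combinations of normalized projections $Q\tau(Q)^{-1/r}$ exhaust the positive part of the unit ball of $L_{r,1}(\tau)$, and the decomposition $x = x_1 - x_2 + i(x_3 - x_4)$ handles a general $x$ at the cost of a universal constant (similarly for the $y$ variable). Combined with the duality $L_{s,\infty}(\tau) = L_{s',1}(\tau)^*$ recalled from \cite{FK,PX}, this identifies the supremum above with $\|T_a \colon L_{r,1}(\tau) \to L_{s,\infty}(\tau)\|$, yielding the claimed equivalence (up to the square root present in the statement). The one point requiring genuine care is verifying that the density step from Corollary \ref{cor1} carries over to the Lorentz setting: this is standard via the $s$-number framework of \cite{FK} together with spectral approximation of positive $\tau$-measurable operators by simple functions of their spectral projections.
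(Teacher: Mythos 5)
Your plan faithfully mirrors the paper's proof of Corollary~\ref{cor6}: replace $K_t(a)$ by the equivalent $k_t(a)$ of Theorem~\ref{thm2b}, optimize over $t$ (via \eqref{re-eq10a} or directly), identify $\sum\|Pa_nQ\|_2^2$ with $\langle T_a(Q),P\rangle$, and pass from projections to the full unit balls of $L_{r,1}$ and $L_{s',1}$ by convexity plus the four-term decomposition. The intermediate step you reach, that (up to constants) $\|a\|^2_{\theta,\infty}\le 1$ iff $\langle T_a(Q),P\rangle\le \tau(P)^{\beta_0}\tau(Q)^{\beta_1}$ with $\beta_0=2\alpha_0(1-\theta)$ and $\beta_1=2\alpha_1\theta$, is correct, and the density/duality argument carried over from Corollary~\ref{cor1} is fine.

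The gap is the closing sentence "these equations fix the indices $r,s$ of the statement": they do not, and you needed to check this. Your computation gives $1/r=\beta_1=2\alpha_1\theta=(1-2/p_1)\theta$ and $1/s'=\beta_0=2\alpha_0(1-\theta)=(1-2/p_0)(1-\theta)$, whereas the statement asks for $1/r=\theta/p_1'=(1-1/p_1)\theta$ and $1/s'=(1-\theta)/p_0'=(1-1/p_0)(1-\theta)$. Since $2\alpha_j=1-2/p_j\neq 1-1/p_j=1/p_j'$ whenever $p_j<\infty$, the two sets of indices coincide only in the degenerate case $p_0=p_1=\infty$ of Corollary~\ref{cor1} (for instance $p_1=4$ gives $1/r=\theta/2$ from your derivation but $3\theta/4$ from the statement). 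The paper's own proof glosses over the same point via the line
$$\tau(P)^{2\alpha_0(1-\theta)}\tau(Q)^{2\alpha_1\theta}\le\tau(P)^{(1-\theta)/p_0'}\tau(Q)^{\theta/p_1'},$$
which is false when $\tau(P)<1$ or $\tau(Q)<1$; so the stated $r,s$ appear to carry a typo ($p_j$ where $p_j/2$ is meant), and the indices that genuinely come out of Theorem~\ref{thm2b} are the ones you derived, i.e.\ $1/r=\theta/(p_1/2)'$ and $1/s=(1-\theta)/(p_0/2)+\theta$, consistent with interpolating the endpoint bounds $T_a\colon L_\infty\to L_{p_0/2,\infty}$ and $T_a\colon L_{(p_1/2)',1}\to L_1$. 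Your argument is otherwise sound; the error is in asserting the match with the statement without verifying it.
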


\begin{proof}
Using the equivalent of $K_t$ found in Theorem~\ref{thm2b}, we obtain
\[
\sup t^{-\theta} K_t(a) \simeq \sup\left\{\left(\sum \|Pa_nQ\|^2_2\right)^{1/2} \tau(P)^{-\alpha_0(1-\theta)}\tau(Q)^{-\alpha_1\theta}\right\}.
\]
The unit ball for this last norm is characterized by
\begin{align*}
\sum\tau(Pa_nQa^*_n) &\le \tau(P)^{2\alpha_0(1-\theta)} \tau(Q)^{2\alpha_1\theta}\\
&\le \tau(P)^{(1-\theta)/p'_0} \tau(Q)^{\theta/p'_1}
\end{align*}
or equivalently
\[
\langle T_a(Q),P\rangle \le \tau(Q)^{1/r} \tau(P)^{1/s'}.
\]
As before, this implies for all $x,y\ge 0$
\[
|\langle T_a(x),y\rangle|\le \|x\|_{r,1} \|y\|_{s',1}
\]
and then using $x=x_1-x_2 + i(x_3-x_4)$ we can extend it to arbitrary elements up to an extra factor 4.
Thus we conclude by homogeneity
\[
\sup\nolimits_{t>0} t^{-\theta}K_t(a) \simeq \|T_a\colon \ L_{r,1}(\tau)\to L_{s,\infty}(\tau)\|^{1/2}.\eqno \qed
\]
\renewcommand{\qed}{}\end{proof}
\begin{rem}\label{rem7}
By \cite{Mali} for any interpolation pair $(B_0,B_1)$ we have
\[
(B_0\cap B_1, B_0+B_1)_{\theta,q} = \begin{cases}
                                    B_{\theta,q}\cap B_{1-\theta,q}&\text{if ~~$\theta\le 1/2$}\\
B_{\theta,q} +B_{1-\theta,q}&\text{if ~~$\theta\ge 1/2$}
                                    \end{cases}
\]
where $B_{\theta,q} = (B_0,B_1)_{\theta,q}$.
If we apply this to the specific pair $$B_0  = S_\infty[ R],\quad B_1= S_\infty[ C],  $$
the result can be interpreted   in terms of operator space
interpolation   in Xu's sense (see \cite{Xu}). This gives us that we have \emph{completely isomorphically}
\[
(R\cap C, R+C)_{\theta,\infty} = \begin{cases}
                                 C_{\theta,\infty} \cap R_{\theta,\infty}&\theta\le 1/2\\
C_{\theta,\infty}+R_{\theta,\infty}&\theta\ge 1/2
                                 \end{cases}
\]
where $C_{\theta,q} = (R,C)_{\theta,q}$ and $R_{\theta,q} = (C,R)_{\theta,q} = C_{1-\theta,q}$. Note that in particular, we have as operator spaces:
\[
(R\cap C, R+C)_{1/2,\infty} \simeq R_{1/2,\infty} \simeq C_{1/2,\infty}
\]
and similarly, by duality,  (see \cite[\S 4]{Xu})  for $(1/2,1)$.
\end{rem}
\section{Non-commutative Khintchine inequality in $L_{2,q} $ ($1\le q<\infty)$}

This section is motivated by \cite{Ji}. In   \cite{Ji},    martingale inequalities
extending those of \cite{PX} are proved for the non-commutative Lorentz spaces
$L_{p,q}(\tau)$ associated to a semi-finite trace with $p\not=2$. However,
the case  $p=2,q\not= 2$  cannot be treated by the interpolation arguments
used in  \cite{Ji}. In fact,  even the simpler case of the Khintchine inequality
is open. The problem is to find a ``nice" (similarly nice as in the case
of $L_p(\tau)$ presumably involving row and column norms) equivalent of the average over all signs $\vp_n=\pm 1$
of \begin{equation}\label{2q}
\left\|\sum \vp_n x_n\right\|_{2,q}\end{equation}
when $x_n\in L_{2,q}(\tau)$.
 In this section we present a partial solution, which 
has the advantage to be indeed a deterministic equivalent of 
 \eqref{2q}. We call it partial because there may be
a  more explicitly computable   equivalent for \eqref{2q}.

 \n {\bf Notation:} 
 Recall that  
 $S_p$ denotes the Schatten $p$-class ($1\le p<\infty$),  
   $S_\infty$ the space of compact operators on Hilbert space,
and   $S_1$   the trace class.
We will denote by  $S_\infty(R) $ (resp. $S_\infty(C) $)  the space  of all sequences $x=(x_n)$ with $x_n\in S_\infty$
such that   the series $\sum^\infty_1 x_nx^*_n$ (resp.\ $\sum^\infty_1 x^*_nx_n$) converges in
norm, and we equip it with the norm
 $ \|x\|_R = \left\|\left(\sum x_jx^*_j\right)^{1/2}\right\|$
 (resp. $ \|x\|_C = \left\|\left(\sum x^*_jx_j\right)^{1/2}\right\|$).
We then define
$$A_0=S_\infty(R) \cap S_\infty(C) $$
 and we  equip it   with the norm
$\|x\|_{A_0} = \max\{\|x\|_R, \|x\|_C\}$.

 We denote by  $S_1(R) $ (resp. $S_1(C) $)  
   the space   that was already introduced  as $M_*(R)$ (resp. $M_*(C)$) when $M_*=S_1,M=B(\ell_2)$, see \eqref{eqs1}.
 
   We then define
$$A_1=S_1(R) + S_1(C) 
$$
 and we  equip it   with the norm
$\|x\|_{A_1}  = \|x\|_{S_1(R)+S_1(C)}  
= \inf_{x=y+z} \|y\|_{S_1(R)} + \|z\|_{S_1(C)}.
$

Clearly, the couple $(A_0,A_1)$ forms a compatible couple in the sense of interpolation.
We denote for any $1<p<\infty$, $1\le q\le \infty$
\[
|||x|||_{p,q} = \|x\|_{(A_0,A_1)_{\theta,q}} \quad \text{where}\quad \theta=\frac{1-\theta}\infty + \frac\theta1 =\frac1p.
\]

Note that $A_1=A_0^*$ isometrically (in the usual duality defined by $\langle x, y\rangle= \sum {\rm tr}(x_n {}^t y_n)$) and 
  by a well known result (see \cite{CS}  and references there) this implies 
$(A_0,A_1)_{1/2,2}=\ell_2(S_2)$ isomorphically. Moreover,
the couple $(A^*_0,A^*_1)$ can be  clearly viewed as  compatible
and we have (see \cite[p. 54]{BL}) for $0 < \theta<1$ and $1\le q<\infty$
\begin{equation}\label{eq222}
(A_0,A_1)_{\theta,q}^*={(A^*_0,A^*_1)_{\theta,q'}}
\end{equation}
with equivalent norms. 
\begin{thm}\label{thm0}
Let $(\vp_n)$ be the usual independent $\pm 1$ valued random variable (``Rademacher functions'').
   Then for any $1<p<\infty$, $1\le q< \infty$ 
and for any  finite sequence $ x=(x_n)_n$ of operators in  $ S_{p,q}$
\begin{align}\label{eq1}
 \quad \int \left\|\sum \vp_nx_n\right\|_{S_{p,q}} d\mu &\simeq |||x|||_{p,q}\\
\intertext{and also}
\label{eq1b} \left\|\sum \vp_n\otimes x_n\right\|_{L_{p,q}(\mu\times {{\rm tr}})}
&\simeq |||x|||_{p,q}
\end{align}
where $\mu$ is the usual probability on $\{-1,1\}^{\bb N}$. Moreover, \eqref{eq1b} remains valid for $q=\infty$.\\
Here $A\simeq B$ means there are positive constants
$c_{p,q}$ and $C_{p,q}$ such that $c_{p,q}A\le B\le C_{p,q}A$.
\end{thm}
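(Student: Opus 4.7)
The plan is to derive \eqref{eq1b} by reducing to interior $L_r$-levels where the classical Lust--Piquard inequality applies, and then to deduce \eqref{eq1} from \eqref{eq1b} via Kahane's inequality. Write $T(x)=\sum_n\vp_n\otimes x_n$. For each $1<r<\infty$, the Lust--Piquard inequality \cite{LP1} combined with Kahane's inequality yields
\[
\|Tx\|_{L_r(\mu\times\mathrm{tr})}\;\simeq_r\;\|x\|_{A^r},
\]
where $A^r:=S_r[R\cap C]$ for $r\geq 2$ and $A^r:=S_r[R+C]$ for $r\leq 2$ (the two expressions both give $A^2=\ell_2(S_2)$). In particular, $T$ is an isomorphism from $A^r$ onto the first Rademacher chaos inside $L_r(\mu\times\mathrm{tr})$.

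The complex interpolation counterpart (cf.\ \cite{P3}) gives $(A_0,A_1)^{1/r}\simeq A^r$ for $1<r<\infty$. Choose $1<r_0<p<r_1<\infty$ lying in a single Khintchine regime (both $\geq 2$ when $p\geq 2$, both $\leq 2$ when $p\leq 2$). A reiteration argument in the sense of \cite{BL} then yields
\[
(A_0,A_1)_{\theta,q}=\bigl((A_0,A_1)^{1/r_0},(A_0,A_1)^{1/r_1}\bigr)_{\eta,q}=(A^{r_0},A^{r_1})_{\eta,q},
\]
where $\theta=1/p$ and $\eta$ satisfies $1/p=(1-\eta)/r_0+\eta/r_1$.

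On the target side, the orthogonal projection $P$ onto the first Rademacher chaos is bounded on each $L_r(\mu\times\mathrm{tr})$ for $1<r<\infty$ (another direct consequence of Lust--Piquard). This makes the chaos a complemented, hence $K$-closed, subspace of the couple $(L_{r_0}(\mu\times\mathrm{tr}),L_{r_1}(\mu\times\mathrm{tr}))$. Combined with the standard identity $(L_{r_0},L_{r_1})_{\eta,q}=L_{p,q}(\mu\times\mathrm{tr})$, this produces
\[
\|Tx\|_{L_{p,q}(\mu\times\mathrm{tr})}\;\simeq\;\|x\|_{(A^{r_0},A^{r_1})_{\eta,q}}\;=\;|||x|||_{p,q},
\]
which is \eqref{eq1b}. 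The equivalence \eqref{eq1} for $q<\infty$ is then obtained by applying Kahane's inequality in the Banach space $S_{p,q}$: for any first Rademacher chaos the Bochner $L_1(\mu;S_{p,q})$ and $L_p(\mu;S_{p,q})$ norms are equivalent, and a Fubini-type identification on chaos elements matches the latter with the ambient non-commutative Lorentz norm $L_{p,q}(\mu\times\mathrm{tr})$, up to constants depending on $p,q$.

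The hard part will be justifying the $K$-closedness of the Rademacher chaos in $(L_{r_0},L_{r_1})$, which rests on uniform $L_r$-boundedness of the chaos projection across $r\in(1,\infty)$---essentially the $L_r$-Lust--Piquard inequality restated. A secondary subtlety is the deduction of \eqref{eq1} from \eqref{eq1b} when $q\neq p$, since the Bochner $L_p(\mu;S_{p,q})$ and non-commutative $L_{p,q}(\mu\times\mathrm{tr})$ norms are generally distinct; but their equivalence on first Rademacher chaos follows from standard moment-equivalence results.
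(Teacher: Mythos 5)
Your reduction to interior $L_r$-levels breaks down at precisely the case the theorem is designed to cover, namely $p=2$. You restrict to $1<r_0<p<r_1<\infty$ ``lying in a single Khintchine regime (both $\geq 2$ when $p\geq 2$, both $\leq 2$ when $p\leq 2$).'' If $p=2$ there is no way to satisfy $r_0<2<r_1$ while also having both $r_j$ on the same side of $2$, so the reiteration step never even starts. As the paper states in its introduction and abstract, the entire point of Theorem \ref{thm0} is $p=2$, $q\neq 2$ --- the remaining range was already accessible to Jiao's method \cite{Ji}. The paper sidesteps this by not interpolating between interior points at all: it invokes the \emph{endpoint} $K$-closedness of the pair $(H^\infty(B(\ell_2)), H^1(S_1))$ inside $(L_\infty(B(\ell_2)), L_1(S_1))$ from \cite{P7}, together with the fact that $T\colon f\mapsto (\hat f(2^n))_n$ is bounded $H^1(S_1)\to A_1$ and its adjoint is bounded $A_1\to H^1(S_1)$ from \cite{LPP}. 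Interpolating $T$ between the two endpoints $(\infty,1)$ produces a one-sided estimate for every $(\theta,q)$ simultaneously, including $\theta=1/2$, and then one self-dualizes. In contrast, your strategy would only deliver $p=2$ if you allowed $r_0<2<r_1$ in different regimes, in which case the spaces $A^{r_0}=S_{r_0}[R+C]$ and $A^{r_1}=S_{r_1}[R\cap C]$ have different structures; this is not impossible in principle (complex interpolation spaces are still of class $\mathcal{C}_{1/r_j}$ so abstract reiteration applies), but you explicitly forbid it and never address it.

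A secondary concern is the passage from \eqref{eq1b} to \eqref{eq1}. You claim that for chaos elements, $L_p(\mu;S_{p,q})$ and $L_{p,q}(\mu\times\mathrm{tr})$ are equivalent by a ``Fubini-type identification'' plus ``standard moment-equivalence results.'' That is not a triviality: $L_{p,q}(\mu\times\mathrm{tr})$ is a Lorentz space over the product, not a Bochner space, and a priori they are unrelated. The paper proves this carefully in Lemma \ref{lem6}: one interpolates the chaos projection $P\colon L_{p_j,p_j}(\mu\times\mathrm{tr})\to L_r(\mu;S_{p_j})$ and then, choosing $r\geq q$ (the crucial parameter choice, using $q<\infty$), one exploits the inclusion $L_q(L_r)\subset L_r(L_q)$ to obtain $(L_r(S_{p_0}),L_r(S_{p_1}))_{\alpha,q}\subset L_r(S_{p,q})$. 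The reverse inequality is then obtained by duality, and Kahane's inequality removes the dependence on $r$. Your proposal does not engage with the Lorentz-index bookkeeping that makes this work.
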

\begin{rk} It is not difficult to extend this Theorem to the case when the trace on $B(\ell_2))$ is replaced by
any semifinite faithful normal trace on a von Neumann algebra, but we  choose for simplicity to present
the details only in the case of $B(\ell_2)$. Indeed, all the ingredients for this extension now
exist in the literature (see \cite{PX}).
\end{rk}
\begin{rem}\label{rem0} Note that the space $M(R)\cap M(C)$ with $M=B(\ell_2)$ considered 
in \S \ref{sec1} is nothing but the bidual $A_0^{**}$ of $A_0$. Using this (and truncation of matrices
in the most usual way), one can check that, for any $x\in A_0+A_1$ and any $t>0$, we have
$K_t(x;A_0^{**},A_1)=K_t(x;A_0,A_1)$ and hence the norms
of the spaces $(A_0,A_1)_{\theta,q}$ and $(A_0^{**},A_1)_{\theta,q}$
coincide on any such $x$ for any $0<\theta<1$ and $ 1\le q\le \infty$.

\end{rem}
\begin{rem}\label{rem1}
Note that by interpolation for any $1<p<\infty$ (and $1\le q\le \infty$) the orthogonal projection onto $\ovl{\rm span}[\vp_n]$ is bounded on $L_{p,q}({{\mu}}\times{{\rm tr}})$. Indeed, we know that it is bounded on $L_{p_0}({{\mu}} \times {{\rm tr}})$, $L_{p_1}({{\mu}}\times {{\rm tr}})$ for $1<p_0<p<p_1$, and then we can use interpolation, together with the classical reiteration theorem (see \cite[p. 48]{BL}).
\end{rem}

\begin{rem}\label{rem2}
The equivalence between $[\vp_n]$ and Sidon lacunary sequences such as $[z^{2^n}]$ proved in \cite{P00} implies that for any $1<p<\infty$ and $1\le q\le \infty$
\[
\left\|\sum \vp_n\otimes x_n\right\|_{L_{p,q}(\mu\times {{\rm tr}})} \simeq \left\|\sum z^{2^n}\otimes x_n\right\|_{L_{p,q}(m\times {{\rm tr}})}
\]
where $m$ is  normalized Haar measure on ${\bb T}$. Again this is true by \cite{P00}  on $L_{p_i}$, $p_0<p<p_1$ (with simultaneous complementation) so this follows by interpolation.
\end{rem}

\begin{rem}\label{rem3} Let $E$ be any Banach space.
When $1\le p<\infty$, we   denote by  $L_p(E)$ the space of $E$-valued functions $p$-integrable 
on the unit circle, 
in Bochner's sense,  with the usual norm. We denote by $H^p(E)$ the subspace
of all functions $f$ with Fourier transform $\hat f$  supported on the non-negative integers.
The case $p=\infty$ is slightly different. We denote
 $L_\infty(B(\ell_2))$  the space of
essentially bounded  weak-$*$ measurable functions on the unit circle with values in $B(\ell_2))$,
equipped with the sup-norm. We again denote by $H^\infty(B(\ell_2))$
(resp.   $\ovl H^\infty_0(B(\ell_2))$ ) the subspace
formed of all functions with Fourier transform supported on the non-negative (resp. negative) integers.

By \cite[Cor. 3.4]{P7} we know that the pair $(H^\infty(B(\ell_2)), H^1(S_1))$ is $K$-closed in $(L_\infty(B(\ell_2)), L_1(S_1))$. From this it follows that:
\begin{align*}
\left\|\sum\nolimits^\infty_1 z^{2^n}x_n\right\|_{(H^\infty (B(\ell_2)), H^1(S_1))_{\theta,q}} &\simeq \left\|\sum z^{2^n}x_n\right\|_{(L_\infty (B(\ell_2)), H^1(S_1))_{\theta,q}}\\
&\simeq \left\|\sum z^{2^n} \otimes x_n\right\|_{L_{p,q}(dm\times {{\rm tr}})}.
\end{align*}
\end{rem}

\begin{rem}\label{rem4}
From \cite{LPP} we know that the mapping      $T\colon \ H^1(S_1)\to  A_1$
defined by $Tf=(\hat f(2^n))_n$  is a
   bounded surjection from $H^1(S_1)$ to $A_1 = S_1(R) +S_1(C)$. 
   Moreover, the (adjoint) map taking $x=(x_n)$ to $\sum z^{2^n} x_n$ is bounded
   from $A_1 = S_1(R) +S_1(C)$ to $H^1(S_1)$.
  Using the identity $H^1(S_1)^*=L_\infty(B(\ell_2))/\ovl H^\infty_0(B(\ell_2))$,
   by duality,  this implies that  
\[
T \text{ is bounded from } L_\infty(B(\ell_2))/\ovl H^\infty_0(B(\ell_2)) = H_1(S_1)^* \text{ onto }  A^*_1.
\]
By interpolating, we find
\[
T\colon \ (L_\infty(B(\ell_2))/\ovl H^\infty_0(B(\ell_2)), H^1(S_1))_{\theta,q} \to (A_1^*,A_1)_{\theta,q}.
\]
Note that the natural  ''inclusion" $H^\infty(B(\ell_2))\to L_\infty/\ovl H^\infty_0(B(\ell_2))$  trivially has norm $\le 1$. Therefore:
\[
T\colon \ (H^\infty(B(\ell_2)), H^1(S_1))_{\theta,q} \to (A_1^*,A_1)_{\theta,q}.
\]
Note that $A_1^*=A_0^{**}$. Therefore, using Remark \ref{rem0}, it is easy to check that
any \emph{finite} sequence $x=(x_n)$ with $x_n\in S_{p,q}\subset S_\infty$, the norms
of $(A_1^*,A_1)_{\theta,q}$ and $(A_0,A_1)_{\theta,q}$ coincide on $x$
for any $1<p<\infty, 1\le q\le \infty$ (here $\theta=1/p$).\\
Thus, invoking again \cite[Cor. 3.4]{P7},  by the preceding two remarks,  we find
\end{rem}

\begin{lem}\label{lem5}
Let $1<p<\infty, 1\le q\le \infty$. Then
$T$ is bounded from the subspace of all analytic polynomials
in  $L_{p,q}(dm\times \text{\rm tr})$ into $(A_0,A_1)_{\theta,q}$.
In particular, for some $c$, for all analytic polynomial $f$ with coefficients in $S_{p,q}=L_{p,q}( \text{\rm tr})$
\[
|||(\hat f(2^n))_n|||_{p,q} \le c\|f\|_{L_{p,q}(dm\times \text{\rm tr})}.
\]
\end{lem}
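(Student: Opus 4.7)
The plan is to chain together the observations in Remarks \ref{rem0}--\ref{rem4} in a careful way. From Remark \ref{rem4}, the map $T$ sending $f \mapsto (\hat f(2^n))_n$ is already known to be bounded from $(H^\infty(B(\ell_2)), H^1(S_1))_{\theta,q}$ into $(A_1^*, A_1)_{\theta,q}$ for $\theta = 1/p$. So my task reduces to identifying the norms on each side with those appearing in the statement: the domain should carry the $L_{p,q}(dm\times \text{tr})$-norm when restricted to analytic polynomials, and the target should carry the $(A_0, A_1)_{\theta,q}$-norm on finite sequences.

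For the target identification, I would invoke Remark \ref{rem0}. Given a finite sequence $x = (\hat f(2^n))_n$ arising from an analytic polynomial $f$ with coefficients in $S_{p,q}$, one has $x\in A_0+A_1$ and the $K$-functional of $x$ with respect to $(A_0^{**}, A_1) = (A_1^*, A_1)$ coincides with its $K$-functional with respect to $(A_0, A_1)$. Integrating against $t^{-\theta q}\,dt/t$ (or taking the appropriate supremum when $q=\infty$) then gives $\|x\|_{(A_1^*, A_1)_{\theta,q}} = \|x\|_{(A_0, A_1)_{\theta,q}} = |||x|||_{p,q}$ identically on such finite sequences.

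For the domain identification, I would use Remark \ref{rem3}, which records that by \cite[Cor.~3.4]{P7} the pair $(H^\infty(B(\ell_2)), H^1(S_1))$ is $K$-closed in $(L^\infty(B(\ell_2)), L^1(S_1))$. This furnishes, for an analytic polynomial $f$, the equivalence of $\|f\|_{(H^\infty(B(\ell_2)), H^1(S_1))_{\theta,q}}$ with $\|f\|_{(L^\infty(B(\ell_2)), L^1(S_1))_{\theta,q}}$; the latter is in turn equivalent to $\|f\|_{L_{p,q}(dm\times \text{tr})}$ by the classical real interpolation identity $(L^\infty,L^1)_{\theta,q}=L_{p,q}$ at $\theta = 1/p$. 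Concatenating the three equivalences with the boundedness of $T$ from Remark \ref{rem4} yields
\[
|||(\hat f(2^n))_n|||_{p,q} \;\lesssim\; \|T\|\cdot \|f\|_{(H^\infty(B(\ell_2)),H^1(S_1))_{\theta,q}} \;\lesssim\; \|f\|_{L_{p,q}(dm\times \text{tr})},
\]
which gives both assertions of the lemma.

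I do not expect any serious obstacle here, since all the hard technical ingredients (the Lust-Piquard--Pisier surjection of \cite{LPP} used in Remark \ref{rem4}, the $K$-closedness theorem of \cite{P7} used in Remark \ref{rem3}, and the bidual identification in Remark \ref{rem0}) are already in place; the proof is essentially organizational. The only delicate checks are that the $K$-functional identity of Remark \ref{rem0} genuinely transfers to equality of real-interpolation norms on \emph{finite} sequences (immediate from the integral/supremum definition), and that $K$-closedness yields the quoted norm equivalence uniformly in the secondary exponent $q\in[1,\infty]$, which is built into the definition of $K$-closedness since it operates at the level of the $K$-functional itself.
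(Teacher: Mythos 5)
Your proof is correct and follows essentially the same route the paper takes: the paper's own argument for Lemma~\ref{lem5} is precisely the concatenation of Remark~\ref{rem3} ($K$-closedness of $(H^\infty(B(\ell_2)),H^1(S_1))$ in $(L^\infty(B(\ell_2)),L^1(S_1))$ plus the classical identity $(L^\infty,L^1)_{\theta,q}=L_{p,q}$), Remark~\ref{rem4} (boundedness of $T$ into $(A_1^*,A_1)_{\theta,q}$ via \cite{LPP} and duality), and Remark~\ref{rem0} (coincidence of the $(A_1^*,A_1)_{\theta,q}$ and $(A_0,A_1)_{\theta,q}$ norms on finite sequences), which is exactly the chain you assemble. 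The only cosmetic difference is that you state the $K$-closedness equivalence for general analytic polynomials rather than for lacunary sums as Remark~\ref{rem3} does, which if anything is the cleaner formulation for what the lemma needs.
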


\begin{proof}[First part of the proof of Theorem \ref{thm0}]
Taking $f=\sum z^{2^n} x_n$ in the preceding Lemma we get
\[
|||x|||_{p,q} \lesssim \left\|\sum z^{2^n}x_n\right\|_{L_{p,q}(dm\times {{\rm tr}})}.
\]
By duality we get (using Remark \ref{rem1} and \eqref{eq222})
\[
\left\|\sum x^{2^n}x_n\right\|_{L_{p',q'}(dm\times {{\rm tr}})} \lesssim |||x|||_{p',q'}
\]
and since this holds for \emph{all} $1<p<\infty, 1\le q\le\infty$ (the case   $q=\infty,q'=1$ requires a  minor adjustment, see \cite[Remark, p.55]{BL}
). We deduce
\begin{align*}
|||x|||_{p,q} &\simeq \left\|\sum z^{2^n}x_n\right\|_{L_{p,q}(dm\times {{\rm tr}})}\\
\intertext{and hence by Remark~\ref{rem2}}
&\simeq \left\|\sum \vp_nx_n\right\|_{L_{p,q}(d\mu\times {\rm tr})}.
\end{align*} This proves \eqref{eq1b}.
\end{proof}
 
To complete the proof we use the following rather standard

\begin{lem}\label{lem6} For any
$ 1<p<\infty, 1\le q< \infty$,  any $ f\in L_{p,q}(\mu \times \text{\rm tr})$ 
of the form $f = \sum \vp_n x_n$ with $x_n\in S_{p,q}=L_{p,q}( \text{\rm tr})$ and any $r<\infty$ we have         
\[
\|f\|_{L_r(S_{p,q})} \lesssim \|f\|_{L_{p,q}(\mu\times \text{\rm tr})} .
\]
\end{lem}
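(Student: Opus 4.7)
The plan is to combine the classical vector-valued Kahane inequality with a real-interpolation argument on the non-commutative Lorentz scale.

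Since $S_{p,q}$ is a Banach space for $1<p<\infty$, $1\le q<\infty$ (after a standard equivalent renorming), Kahane's inequality for Banach-space-valued Rademacher sums applied with $E = S_{p,q}$ gives
\[
\Bigl\|\sum \vp_n x_n\Bigr\|_{L_r(\mu; S_{p,q})} \approx_{r,s} \Bigl\|\sum \vp_n x_n\Bigr\|_{L_s(\mu; S_{p,q})} \qquad (0<r,s<\infty).
\]
Hence it suffices to bound $\|f\|_{L_r(\mu; S_{p,q})}$ by $\|f\|_{L_{p,q}(\mu\times \text{tr})}$ for one convenient $r$; I would take $r=q$, and let Kahane propagate the estimate back to every $0<r<\infty$ at the end.

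Fix auxiliary exponents $1 < p_0 < p < p_1 < \infty$ with $\frac{1}{p} = \frac{1-\theta}{p_0} + \frac{\theta}{p_1}$. Standard real-interpolation theory for semifinite non-commutative $L_p$-spaces yields
\[
L_{p,q}(\mu \times \text{tr}) = \bigl(L_{p_0}(\mu \times \text{tr}),\, L_{p_1}(\mu \times \text{tr})\bigr)_{\theta,q},
\]
and Fubini identifies $L_{p_i}(\mu \times \text{tr}) = L_{p_i}(\mu;\, S_{p_i})$ isometrically at each endpoint. By Remark~\ref{rem1} the Rademacher projection $P$ is bounded on each $L_{p_i}(\mu \times \text{tr})$, and since $Pf$ is a Rademacher sum in $L_{p_i}(\mu; S_{p_i})$ the vector-valued Kahane inequality gives
\[
\|Pf\|_{L_q(\mu; S_{p_i})} \approx \|Pf\|_{L_{p_i}(\mu; S_{p_i})} \lesssim \|f\|_{L_{p_i}(\mu; S_{p_i})}.
\]
Real interpolation with parameters $(\theta,q)$ then yields the bounded map
\[
P: L_{p,q}(\mu \times \text{tr}) \longrightarrow \bigl(L_q(\mu; S_{p_0}),\, L_q(\mu; S_{p_1})\bigr)_{\theta,q}.
\]

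The standard identification of real interpolation with Bochner spaces when the integration exponent matches the Lorentz parameter (a direct Fubini computation on the $K$-functional),
\[
\bigl(L_q(\mu; X_0),\, L_q(\mu; X_1)\bigr)_{\theta,q} = L_q\bigl(\mu;\, (X_0, X_1)_{\theta,q}\bigr),
\]
combined with the Schatten-Lorentz identification $(S_{p_0}, S_{p_1})_{\theta,q} = S_{p,q}$, identifies the target of the displayed map with $L_q(\mu; S_{p,q})$. Since $Pf = f$ for Rademacher sums, we obtain $\|f\|_{L_q(\mu; S_{p,q})} \lesssim \|f\|_{L_{p,q}(\mu \times \text{tr})}$, and the first paragraph extends the estimate to every $0<r<\infty$.

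The only technical point of substance is the Bochner-Lorentz identification above, which hinges on the coincidence between the Bochner exponent $q$ and the real-interpolation parameter $q$; for a mismatched exponent only one-sided inclusions are available through Minkowski's integral inequality applied to the $K$-functional, which is precisely why I chose $r=q$ in the first paragraph. The remaining ingredients (non-commutative Lorentz interpolation, Fubini for $L_p(\mu \times \text{tr})$, and boundedness of the Rademacher projection on non-commutative $L_{p_i}$) are by now standard.
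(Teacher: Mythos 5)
Your proposal is correct and takes essentially the same route as the paper: interpolate the Rademacher projection $P$ between the diagonal endpoint spaces $L_{p_i}(\mu\times\mathrm{tr})=L_{p_i}(\mu;S_{p_i})$, with Kahane used at the endpoints to replace the outer exponent, and then pass from $(L_r(\mu;S_{p_0}),L_r(\mu;S_{p_1}))_{\theta,q}$ into $L_r(\mu;S_{p,q})$ via the one-sided embedding that is available when the outer exponent dominates the interpolation exponent. The only cosmetic difference is that the paper keeps $r\ge q$ arbitrary and invokes the Minkowski-type inclusion $L_q(L_r)\subset L_r(L_q)$, whereas you specialize to $r=q$, phrase the inclusion as the Lions--Peetre Bochner--interpolation identity, and then re-expand the range of $r$ with a final application of Kahane; both come down to the same $K$-functional/Fubini estimate.
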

\begin{proof}
We will use repeatedly Kahane's inequality for which we refer to \cite{P00} or \cite[p. 100]{LT}.
By $K$-convexity, the mapping $P\colon \ f\in L_{p,p}(\mu\times {\rm tr}) \mapsto L_r(\mu;S_p)$ defined by (projection onto span$[\vp_n]$)
\[
Pf  = \sum \langle f,\vp_n\rangle\vp_n
\]
is bounded for any value of $1<p<\infty$ and (by Kahane's inequality) $1\le r<\infty$. By interpolation, it follows that $P$ is bounded from
\[
(L_{p_0,p_0}(\mu\times {\rm tr}), L_{p_1,p_1}(\mu\times {\rm tr}))_{\alpha,q} \to (L_r(S_{p_0}), L_r(S_{p_1}))_{\alpha,q}
\]
for any $0<\alpha<1$, $1\le q\le \infty$.

We may choose $p_0<p<p_1, \frac1p = \frac{1-\alpha}{p_0} + \frac\alpha{p_1}$ and $r\ge q$ (here we use $q<\infty$), we then get
\[
P\colon  \ L_{p,q}(\mu\times {\rm tr}) \to (L_r(S_{p_0}), L_r(S_1))_{\alpha,q}
\]
but now $r\ge q$ guarantees that $L_q(L_r) \subset L_r(L_q)$  and hence  
\begin{align*}
(L_r(S_{p_0}), L_r(S_{p_1}))_{\alpha,q} \subset &L_r((S_{p_0}, S_{p_1})_{\alpha,q})=
L_r(S_{p,q}).
\end{align*}
Thus Lemma \ref{lem6} follows by restricting to $f=\sum \vp_nx_n$.
\end{proof}

\begin{proof}[End of proof of Theorem \ref{thm0}]
By Lemma \ref{lem6} we get
\[
\left\| \sum \vp_nx_n\right\|_{L_r(\mu; S_{p,q})} \lesssim |||x|||_{p,q}
\]
and hence again by duality
\[
|||x|||_{p',q'} \lesssim \left\|\sum \vp_nx_n\right\|_{L_{r'}(\mu; S_{p',q'})}.
\]
Then we conclude since Kahane's inequality  allows us to replace both $r$ and $r'$ by, say, 2.
\end{proof}
\begin{rk} Using Fernique's inequality in place of Kahane's (see \cite{LT})
it is easy to see that the preceding proof of  \eqref{eq1} and \eqref{eq1b} extends
to the case when $(\vp_n)$ is replaced by a sequence of i.i.d. Gaussian normal random variables.
This implies \eqref{eq1} and \eqref{eq1b}.
Note however 
(see \cite[p. 253]{LT}) that the Gaussian and Rademacher averages
 are not equivalent when $q=\infty$.
\end{rk}

\begin{rem}\label{rem9} Let $A_\theta = (A_0,A_1)_\theta$. 
Let us denote by $\text{Rad}(S_p) $ the closure in $L_{p}(\mu \times {\rm tr})$ 
of the set of finite sums
$  \sum \vp_n x_n$ with $x_n\in S_p$. By the description 
of $A_\theta$ obtained in  
  \cite{P2} (see also \cite{DR}), we know that if we  define $p$ by $\frac1p = \frac{1-\theta}\infty + \frac\theta1$ by Theorem~\ref{thm0}
\[
(A_0,A_1)_{\theta,p} = \text{Rad}(S_p) = \begin{cases}
                                         A_\theta\cap A_{1-\theta}&\text{if ~~$\theta\le \dfrac12$}\\
\noalign{\smallskip}
A_\theta+A_{1-\theta}&\text{if ~~$\theta\ge \dfrac12$.}
                                         \end{cases}
\]
and in particular
\[
(A_0,A_1)_{\theta,p} \simeq (A_0,A_1)_\theta.
\]
But this can also be seen using  \eqref{eq60} and 
the identity  $L_{p,p}  =L_{p}  $ relative to $\varphi \times {\rm tr}$
(and  a simultaneous complementation argument) where  $({\cl M},\varphi)$ is  the free group $\text{II}_1$ factor.
\end{rem}
From Theorem~\ref{thm0}, it is natural to search for an equivalent of the $K$-functional for $(A_0,A_1)$:
\n {\bf Problem:} Find an explicit  description (presumably in terms of $x$, $R$ and $C$) of
\[
K_t(x; A_0,A_1).
\]
\section{Remarks on real interpolation and non-commutative Khintchine}

We need more notation about the Lorentz space version of  the Schatten classes.

\n {\bf Notation:} 
We denote by ${X^c_{p,q}}$ (resp. ${X^r_{p,q}}$) the space of all sequences $x=(x_n)$
with $x_n\in S_{p,q}$ such that the series $\sum x^*_nx_n$  (resp. $\sum x_nx ^*_n$)
 (assumed w.o.t.\  convergent) satisfies
 $\left(\sum x^*_nx_n\right)^{1/2}\in S_{p,q}$ (resp. $\left(\sum x_nx^*_n\right)^{1/2}\in S_{p,q}$).
 We equip these spaces with the norms:
\[
\|(x_n)\|_{X^c_{p,q}} \overset{\text{\rm def}}{=} \left\|\left(\sum x^*_nx_n\right)^{1/2}\right\|_{S_{p,q}}
\quad
{\rm and}\quad  \|(x_n)\|_{X^r_{p,q}} \overset{\text{\rm def}}{=} \|(x^*_n)\|_{X^c_{p,q}}.\]
Lastly, we set $$X^c_p=X^c_{p,p} \quad{\rm and}\quad X^r_p=X^r_{p,p}.$$

Recall the following facts and terminology from \cite{P7}. 
Consider a compatible couple $(X_0, X_1)$ of Banach
(or quasi-Banach) spaces. Assume given a closed subspace
${\cl S}\subset X_0 + X_1$ and let

$${\cl S}_0 = {\cl S}\cap X_0,\qquad {\cl S}_1  = {\cl S}\cap X_1.$$

\n Let $Q_0 = X_0/{\cl S}_0$ and $Q_1 = X_1/{\cl S}_1$ be the associated
quotient spaces. Clearly $(Q_0, Q_1)$ form a
compatible couple since there are natural inclusion
maps $$Q_0 \to (X_0 +X_1)/{\cl S} \quad {\rm and} \quad Q_1 \to
(X_0 + X_1)/{\cl S}.$$
 \n We   say that the couple $({\cl S}_0, {\cl S}_1)$ is
$K$-closed (relative to $(X_0, X_1)$) if there is a
constant $c$ such that
 \begin{equation}{\forall t>0 \quad \forall x\in {\cl S}_0 + {\cl S}_1\quad
K_t(x; {\cl S}_0, {\cl S}_1) \le c K_t(x; X_0, X_1).}
\end{equation}
\n  In the terminology of \cite{P7},  $(Q_0, Q_1)$ is called $J$-closed if, for some constant $c$,
any element  $x\in Q_0\cap Q_1$ admits a simultaneous lifting  $\hat x$ satisfying
  $$\forall t >0 \quad    
J_t(\hat x; X_0, X_1) \le  c J_t(x; Q_0, Q_1).$$
In the present paper we make the convention to say that $({\cl S}_0,{\cl S}_1)$ is $J$-closed
when the pair $(Q_0, Q_1)$ is   $J$-closed in the above sense.\\
Equivalently, we will say that $({\cl S}_0,{\cl S}_1)$ is $J$-closed if there is a constant   $c$ such that for any $x\in X_0\cap X_1$ there is 
a single $\widehat x\in S_0\cap S_1$ satisfying simultaneously $\text{dist}_{X_j}(x,\widehat x) \le c \text{ dist}_{X_j}(x,{\cl S}_j)$ both for $j=0$ and $j=1$. The basic (simple but useful) fact
on which \cite{P7} rests, is  that, with our new terminology,  $K$-closed  is equivalent to $ J$-closed (this statement should
not be confused with the more obvious fact associated to the duality between
the $K$ and  $J$ norms or between subspaces and quotients).  
Note also that this is valid for quasi-normed spaces. 

\n {\bf Example:} Consider $1\le p_0,p_1\le \infty$. Let $X^c_j = X^c_{p_j}$, $X^r_j = X^r_{p_j}$, for $j=0,1$.
Let $X_j = X^c_j \oplus X^r_j$ and ${\cl S}_j\subset X_j$, ${\cl S}_j \overset{\text{def}}{=} \{(x,- x)\}$.
 Then $${\cl S}_j \simeq X^c_j \cap X^r_j\quad{\rm and}\quad X_j/{\cl S}_j \simeq X^c_j + X^r_j.$$ 
 Note also the special case: if $p_1=2$, $X_1/{\cl S}_1\simeq {\cl S}_1 \simeq \ell_2(S_2)$.

\begin{pro}\label{pro11} The above pair $({\cl S}_0,{\cl S}_1)$ is $J$-closed (and hence $K$-closed) in the following cases: 
\item[\rm (i)]  If $0< p_0 < 2$ and $p_1=2$. 
\item[\rm (ii)] If $2 < p_0 \le \infty$ and $p_1=2$.\\
Consequently,  in both cases, for any $p$ strictly between $p_0$ and $p_1=2$ and any $1\le q\le \infty$,  with $\theta$ defined by  $1/p = (1-\theta)/{p_0} +\theta/2$,  we have (with equivalent norms)
\[
({\cl S}_0, {\cl S}_1)_{\theta,q} = X^c_{p,q}\cap X^r_{p,q}\quad{\rm and}\quad
(X_0/{\cl S}_0, X_1/{\cl S}_1)_{\theta,q}=X^c_{p,q}+ X^r_{p,q}.
\]

\end{pro}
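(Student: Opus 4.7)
The plan is to establish the $J$-closedness assertions in both cases---using the $K$-closed $\iff$ $J$-closed equivalence from \cite{P7}, valid in the quasi-Banach setting---and then to deduce the interpolation identities as a formal consequence. The endpoint case (ii) with $p_0 = \infty$ is precisely the simultaneous decomposition (1.1)--(1.2), which is the main point of \S 5 (proved there prior to this Proposition). Translated into the language of $J$-closedness: given $(a, b) \in X_0 \cap X_1$, applying the decomposition $a + b = x_1 + x_2$ provided by (1.1)--(1.2) yields $z = a - x_2$ so that $\hat{x} = (z, -z) \in \cl{S}_0 \cap \cl{S}_1$ satisfies both required simultaneous distance estimates in $X_0$ and $X_1$. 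A routine density argument handles the distinction between $M(R), M(C)$ (von Neumann, w.o.t.) and $X^r_\infty, X^c_\infty$ (compact, norm convergence).

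The general case (ii) with $2 < p_0 < \infty$ I would obtain from the endpoint by reiteration in real interpolation. Writing $X^c_{p_0} \simeq (X^c_\infty, X^c_2)_{2/p_0,\, p_0}$ and analogously for $X^r$, the couple $(X^c_{p_0} \oplus X^r_{p_0},\, \ell_2(S_2) \oplus \ell_2(S_2))$ arises from $(X^c_\infty \oplus X^r_\infty,\, \ell_2(S_2) \oplus \ell_2(S_2))$ by real interpolation with one endpoint fixed, and $K$-closedness is preserved under this operation (a Wolff-type statement compatible with the machinery of \cite{P7}). Case (i) with $p_0 \in [1, 2)$ then follows from case (ii) with parameter $p_0' \in (2, \infty]$ by duality: the annihilator of $\cl{S}_j = \{(x, -x)\}$ inside $X_j^*$ is the diagonal $\{(y, y)\}$, which effects the swap between the ``intersection'' subspace and the ``sum'' quotient, while $K$-closedness is preserved under passage to duals. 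For $0 < p_0 < 1$, a further reiteration between the endpoints $p_0 = 1$ and $p_0 = 2$ (using the quasi-Banach version of the machinery) settles the remaining range.

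Once $J$-closedness is established, the interpolation identity follows routinely: $(\cl{S}_0, \cl{S}_1)_{\theta, q} = \cl{S} \cap (X_0, X_1)_{\theta, q}$ (with $\cl{S} = \cl{S}_0 + \cl{S}_1$), and direct-sum interpolation yields $(X_0, X_1)_{\theta, q} = X^c_{p, q} \oplus X^r_{p, q}$ with $1/p = (1-\theta)/p_0 + \theta/2$. Reading off under $\cl{S}_j \simeq X^c_{p_j} \cap X^r_{p_j}$ gives $(\cl{S}_0, \cl{S}_1)_{\theta, q} = X^c_{p, q} \cap X^r_{p, q}$, and the dual formula $(X_j/\cl{S}_j)_{\theta, q} = X^c_{p, q} + X^r_{p, q}$ follows by the quotient version of the same principle.

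The main obstacle is the reiteration step used to extend case (ii) from $p_0 = \infty$ to finite $p_0 > 2$: preservation of $K$-closedness under real interpolation requires a Wolff-type argument carried out carefully in the direct-sum setup. Combined with this, the duality bookkeeping of case (i)---tracking the interchange of intersection and sum under annihilation, plus the quasi-Banach adjustments when $p_0 < 1$---is the most delicate part.
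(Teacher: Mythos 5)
There is a genuine gap, and the overall strategy is backwards relative to what actually works.

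First, your plan is circular at the crucial point. You take the endpoint case $p_0 = \infty$ of (ii) as already established, on the grounds that the simultaneous decomposition (1.1)--(1.2) is ``the main point of \S 5 (proved there prior to this Proposition).'' In fact, (1.1)--(1.2) is simply a restatement in concrete language of Proposition \ref{pro11}(ii) with $p_0 = \infty$; it is the \emph{conclusion} of \S 5, not a previously available lemma. Nothing earlier in the paper supplies the endpoint decomposition you start from, so the argument assumes what it must prove.

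Second, the reiteration step you rely on --- extending $K$-closedness from $(p_0 = \infty, p_1 = 2)$ to $(p_0 \in (2,\infty), p_1 = 2)$ by ``preservation of $K$-closedness under real interpolation with one endpoint fixed'' --- is not a standard fact. Wolff's theorem and the reiteration theorem concern the interpolation \emph{spaces}, not the stability of a $K$-closedness estimate for a subcouple when one endpoint of the ambient couple is replaced by an intermediate space. There is no cited or known general principle of this form, and the paper does not use one; it instead proves the decomposition directly for each $p_0$ in the relevant range. Without such a lemma, your case (ii) for finite $p_0>2$ is unsupported, and since you derive (i) from (ii) by duality, (i) collapses with it. There is also a secondary issue: for $0<p_0<1$ the duality step is problematic since $X^c_{p_0}$ is only quasi-normed and has trivial dual, so handling that range purely by ``duality plus reiteration'' is not available.

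The paper's actual route is the reverse of yours, and is constructive. It fixes a near-optimal decomposition $x=y^0+z^0$ at the level $p_0$, forms the densities $\xi=(\sum y^0(n)y^0(n)^*)^{1/2}$, $\eta=(\sum z^0(n)^*z^0(n))^{1/2}$, sets $\alpha=\|\xi\|_{p_0} f^{1/a}$ and $\beta=\|\xi\|_{p_0} g^{1/a}$ with $f,g$ the normalized powers of $\xi,\eta$ and $1/a=1/p_0-1/2$, and then defines the new decomposition via the Schur multiplier with symbol $1/(\alpha_i+\beta_j)$: $y_n=\alpha T^{-1}(x_n)$, $z_n=T^{-1}(x_n)\beta$. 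The multiplier entries lie in $[0,1]$, which gives $\|y\|_{\ell_2(S_2)},\|z\|_{\ell_2(S_2)}\le\|x\|_{\ell_2(S_2)}$ for free, while the H\"older bound $\|\alpha\|_{S_a}\le\|\xi\|_{p_0}$ (valid precisely when $a>0$, i.e.\ $p_0<2$) controls the $X^r_{p_0},X^c_{p_0}$ norms. This proves (i) directly for all $0<p_0<2$, including the quasi-Banach range, and then (ii) follows by duality. The ``Consequently'' part is then routine reiteration on the $K$-closed subcouple, as you describe, and that part of your proposal is correct.

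To repair your proposal you would need either (a) to prove an actual preservation-of-$K$-closedness-under-reiteration lemma in this setting (nontrivial and not in the literature cited), or (b) to supply an independent direct proof of the $p_0=\infty$ endpoint --- but at that point it is simpler and more general to carry out the Schur-multiplier construction at the level $p_0<2$ as the paper does.
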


\begin{proof}
To show $({\cl S}_0,{\cl S}_1)$ is $J$-closed is the same as showing the following\\
{\bf Claim}: $\exists c>0$ such that the following holds. Given $x=(x_n)$, $x_n\in S_{p_0}$ there are $y=(y_n)$ and $z=(z_n)$ such that $x=y+z$ and we have \emph{simultaneously} for $j=0$ and $j=1$
\[
\|y\|_{X^r_{p_j}} + \|z\|_{X^c_{p_j}} \le c\|x\|_{X^r_{p_j}+X^c_{p_j}}.
\]
To prove this, we fix $\vp>0$ and choose $y^0,z^0$ such that $x=y^0+z^0$ and $\|y^0\|_{X^r_{p_0}} + \|z^0\|_{X^c_{p_0}} < \|x\|_{X^r_{p_0} + X^c_{p_0}}+\vp$.

Let then $\xi = (\sum y^0(n)y^0(n)^*)^{1/2}$, $\eta = (\sum z^0(n)^* z^0(n))^{1/2}$. We have $\|\xi\|_{p_0} = \|y^0\|_{X^r_{p_0}}$, $\|\eta\|_{p_0} = \|z^0\|_{X^c_{p_0}}$. We can assume (by perturbation) that $\xi>0$ and $\eta>0$ so that $\xi,\eta$ are invertible. We introduce the states
\[
f = \xi^{p_0} ({\rm tr}(\xi^{p_0}))^{-1},\quad g = \eta^{p_0}({\rm tr} (\eta^{p_0}))^{-1}.
\]
Let $a$ be defined by $\frac1a = \frac1{p_0} - \frac12$. We use the decomposition:
\[
x = (\|\xi\|_{p_0} f^{\frac1a})\widehat y^0 + \widehat z^0\cdot (\|\xi\|_{p_0} g^{\frac1a})
\]
where $\widehat y^0 = (\|\xi\|_{p_0} f^{\frac1a})^{-1}y^0$ and $\widehat z^0 =  z^0\cdot(\|\xi\|_{p_0} g^{\frac1a})^{-1}$. We note that $f = (\xi\|\xi\|^{-1}_{p_0})^{p_0}$ we have
\begin{align*}
\sum\|\widehat y^0(n)\|^2_2 &= {\rm tr}  \sum \widehat y^0(n){\widehat{y}^0(n)}^{*} = {\rm tr}(\|\xi\|^{-2}_{p_0} f^{-\frac1a} \sum y^0(n)y^0(n)^* f^{-\frac1a})\\
&= {\rm tr}(f^{-\frac1a}(\xi/\|\xi\|_{p_0})^2 f^{-\frac1a}) = {{\rm tr}}(f^{\frac2{p_0}-\frac2a})\\
&= {{\rm tr}}(f)=1.
\end{align*}
Similarly $\sum\|\widehat z^0(n)\|^2_2=1$.

To simplify notation let $\alpha=\|\xi\|_{p_0} f^{\frac1a}$ and $\beta = \|\xi\|_{p_0} g^{\frac1a}$. We have $x = \alpha\cdot \widehat y^0 + \widehat z^0\cdot \beta$. Let $T$ be the map (Schur multiplier with respect to the bases in which $\alpha,\beta$ are diagonal) defined by $T(t)  = \alpha t+t\beta$, $t\in S_\infty$. Consider the decomposition
\begin{align*}
x &= T(T^{-1}(x))\\
&= \alpha T^{-1}(x) + T^{-1}(x)\beta.
\end{align*}
We set $y_n = \alpha T^{-1}(x_n)$ and $z_n = T^{-1}(x_n)\beta$. Clearly (since $0 \le \frac{\alpha_i}{\alpha_i+\beta_j} \le 1$) we have
\[
\|y_n\|_{S_2} \le \|x_n\|_{S_2} ,\quad \|z_n\|_{S_2} \le \|x_n\|_{S_2} 
\]
and hence $\|y\|_{\ell_2(S_2)} \le \|x\|_{\ell_2(S_2)}, \|z\|_{\ell_2(S_2)} \le \|x\|_{\ell_2(S_2)}$. But also: $x_n = \alpha \widehat y^0(n) + \widehat z^0(n)\beta$
\begin{align*}
\|T^{-1}(x_n)\|_{S_2} &= \left\|\frac1{\alpha_i+\beta_j} [\alpha_i\widehat y^0(n)_{ij} + z^0(n)_{ij}\beta_j]\right\|_{S_2}\\
&\le \|\widehat y^0(n)\|_{S^2} + \|\widehat z^0(n)\|_{S_2}\\
\Rightarrow\quad \left(\sum\|T^{-1}(x_n)\|^2_{S_2}\right)^{1/2} &\le \left(\sum\|\widehat y^0(n)\|^2_{S_2}\right)^{1/2} + \left(\sum\|\widehat z^0(n)\|^2_{S_2}\right)^{1/2}\\
&\le 2.
\end{align*}
Therefore we conclude that (we set $t_n = T^{-1}(x_n)$)
\begin{align*}
\|y\|_{X^r_{p_0}} &= \|(\alpha T^{-1}(x_n))_n\|_{X^r_{p_0}} = \left\|\alpha\left(\sum t_nt^*_n\right)^{1/2}\right\|_{S_{p_0}}\\
&\le 2\|\alpha\|_{S_a} \le 2\|\xi\|_{p_0} = 2\|y_0\|_{X^r_{p_0}}
\end{align*}
and similarly
\[
\|z\|_{X^c_{p_0}} \le 2\|z_0\|_{X^c_{p_0}}.
\]
This proves our claim, whence (i) and (ii) follows by duality.
\end{proof}

\begin{cor}\label{cor1+}
Recall we set $A_0 = S_\infty(R) \cap S_\infty(C)$, $A_1 = S_1(R) + S_1(C)$. Then:
\begin{align*}
(A_0,A_1)_{\theta,q} &= X^r_{p,q} \cap X^c_{p,q}\quad \text{if}\quad  \theta<\frac12\\
(A_0,A_1)_{\theta,q} &= X^r_{p,q} + X^c_{p,q} \quad \text{if}\quad \theta>\frac12.
\end{align*}
\end{cor}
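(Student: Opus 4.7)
The plan is to reduce both identities to Proposition \ref{pro11} via the reiteration theorem, using the intermediate interpolation space $(A_0,A_1)_{1/2,2}=\ell_2(S_2)$ (the isomorphism is recorded just before Theorem \ref{thm0}) as a ``pivot.'' The crucial observation making this strategy work is that at $p=2$ the row and column norms both collapse to the Hilbert--Schmidt norm, since $\|(\sum x_n^*x_n)^{1/2}\|_{S_2}^2=\sum_n\|x_n\|_{S_2}^2$. Hence $X^c_2=X^r_2=\ell_2(S_2)$ isometrically, and $\ell_2(S_2)$ can therefore be identified equally well with $X^c_2\cap X^r_2$ or with $X^c_2+X^r_2$. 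This flexibility lets me place $\ell_2(S_2)$ on the ``intersection'' side of Proposition \ref{pro11} when interpolating against $A_0$, and on the ``sum'' side when interpolating against $A_1$.

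For $\theta<1/2$, I set $\eta=2\theta\in(0,1)$ and apply the reiteration theorem (see \cite[Thm.~3.5.3]{BL}) with endpoints $0$ and $1/2$ to obtain
\[
(A_0,A_1)_{\theta,q}=(A_0,\ell_2(S_2))_{\eta,q}.
\]
Since $A_0=X^c_\infty\cap X^r_\infty$ and $\ell_2(S_2)=X^c_2\cap X^r_2$, this couple is exactly the pair $(\cl{S}_0,\cl{S}_1)$ of Proposition \ref{pro11} with $p_0=\infty$, $p_1=2$, and case (ii) of that proposition then yields $X^c_{p,q}\cap X^r_{p,q}$ where $1/p=(1-\eta)/\infty+\eta/2=\theta$.

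For $\theta>1/2$, I set $\eta=2\theta-1$ and, by reiteration on the other side, obtain $(A_0,A_1)_{\theta,q}=(\ell_2(S_2),A_1)_{\eta,q}=(A_1,\ell_2(S_2))_{1-\eta,q}$ using the usual symmetry of real interpolation. I now view $A_1=X^c_1+X^r_1$ and $\ell_2(S_2)=X^c_2+X^r_2$ as the quotient pair $(X_0/\cl{S}_0,X_1/\cl{S}_1)$ of Proposition \ref{pro11}(i) with $p_0=1$, $p_1=2$; that proposition delivers $X^c_{p,q}+X^r_{p,q}$ with $1/p=(1-(1-\eta))/1+(1-\eta)/2=\theta$. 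The only mild obstacle is checking that the identifications respect the compatible-couple structures, but this is routine because each identification is realized by the identity map on the ambient space of sequences in $B(\ell_2)$, so the couples in play are literally equal and the real interpolation functor transports the equality.
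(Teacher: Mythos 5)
Your proof is correct and is essentially the paper's argument made explicit: the paper cites $(A_0,A_1)_{1/2,2}\simeq\ell_2(S_2)\simeq X^r_2\cap X^c_2\simeq X^r_2+X^c_2$ and then simply says ``the corollary follows by reiteration,'' which is exactly the pivot-and-reiterate scheme you spell out, applying Proposition~\ref{pro11}(ii) with $(p_0,p_1)=(\infty,2)$ on the $\theta<1/2$ side and Proposition~\ref{pro11}(i) with $(p_0,p_1)=(1,2)$ on the $\theta>1/2$ side. Your index bookkeeping ($\eta=2\theta$, resp.\ $\eta=2\theta-1$) and the observation that $X^r_2=X^c_2=\ell_2(S_2)$ can be read as an intersection or a sum are precisely the unstated details the paper's one-line proof relies on.
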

\begin{proof} As already mentioned,
by a well known self-duality result (note $A_1 \simeq A^*_0$) we have
\begin{align*}
(A_0,A_1)_{1/2,2} \simeq \ell_2(S_2),
\simeq X^r_{2,2}\cap X^c_{2,2}
\simeq X^r_{2,2} + X^c_{2,2}.
\end{align*}
So the corollary follows immediately by reiteration (see \cite[p. 48]{BL}).
\end{proof}
\begin{rk} The complex analogue of the preceding statement was proved in \cite[p. 109]{P000} (see also \cite{DR}).
\end{rk}
\begin{rk} It is known (see e.g. \cite{HK}  or \cite{JP})
 that the maps of the form $t\to \alpha T^{-1}(t)$, $t\to T^{-1}(t)\beta$ 
(corresponding to the Schur multipliers $[\alpha_i(\alpha_i+\beta_j)^{-1}]$ and $[\beta_j(\alpha_i+\beta_j)^{-1}]$) are c.b.\ on $S_Q$ for any $1<Q<\infty$. In particular,  in the preceding proof, we have for some constant $c_Q$
\begin{align*}
\|y\|_{X^r_Q} &\le c_Q\|x\|_{X^r_Q}\\
\|z\|_{X^c_Q} &\le c_Q\|x\|_{X^c_Q}
\end{align*}
and hence $\|y\|_{X^r_Q} + \|z\|_{X^c_Q} \le c_Q\|x\|_{X^r_Q\cap X^c_Q}$. 
Now let us fix $2 < Q<\infty$. Using this
simultaneous selection for the pair $(1,Q)$, one can show that   we have for any $1<p<Q$ and any $1\le q\le \infty$
\[
\|x\|_{X^r_{p,q}+X^c_{p,q}} \lesssim \|x\|_{(X^r_1+X^c_1, X^r_Q\cap X^c_Q)_{\theta,q}}
\]
and hence (reiteration again)
\[
\|x\|_{X^r_{p,q}+X^c_{p,q}} \lesssim \|x\|_{A_{p,q}}
\]
where
\begin{align*}
A_{p,q} \overset{\text{def}}{=} (A_0,A_1)_{\theta,q}\qquad A_0 = X^r_1+X^c_1\qquad
A_1=X^r_\infty \cap X^c_\infty.
\end{align*}
By duality, we also have
\[
\|x\|_{A_{p,q}} \lesssim \|x\|_{X^r_{p,q}\cap X^c_{p,q}}.
\]
\end{rk}

\begin{rk}
In particular we recover a result claimed in \cite[remark after Corollary 4.3]{LMS} (see also \cite{DR})
\[
\|x\|_{X^r_{2,q}+X^c_{2,q}} \lesssim \|x\|_{A_{2,q}} \lesssim \|x\|_{X^r_{2,q}\cap X^c_{2,q}}.
\]
\end{rk}

Exchanging the r\^oles of 1 and $\infty$, we obtain

\begin{cor}\label{cor2}
Let $B_0 = X^r_1\cap X^c_1$, $B_1 = X^r_\infty+X^c_\infty$. Then
\[
(B_0,B_1)_{\theta,q} = \begin{cases}
                       X^r_{p,q}\cap X^c_{p,q}&\text{if ~~$\theta<\dfrac12$}\\
\noalign{\smallskip}
X^r_{p,q}+X^c_{p,q}&\text{if ~~$\theta>\dfrac12$.}
                       \end{cases}
\]
\end{cor}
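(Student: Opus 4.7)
My plan is to mirror the proof of Corollary~\ref{cor1+}. The strategy has two steps: first identify the base case $(B_0, B_1)_{1/2, 2} \simeq \ell_2(S_2)$ via self-duality, and then use reiteration together with Proposition~\ref{pro11} to treat the remaining range $\theta \neq 1/2$.

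For the base case, I would first establish the self-duality $B_0^* \simeq B_1$. Under the natural trace pairing $\langle x, y\rangle = \sum \mathrm{tr}(x_n\, {}^t y_n)$, we have the row/column analogues of the classical $S_1^* = B(\ell_2)$ duality, namely $(X^r_1)^* \simeq X^r_\infty$ and $(X^c_1)^* \simeq X^c_\infty$; and since the dual of an intersection is the sum of duals,
\[
B_0^* = (X^r_1 \cap X^c_1)^* \simeq X^r_\infty + X^c_\infty = B_1.
\]
Invoking the same well-known self-duality input used for the pair $(A_0, A_1)$ in Corollary~\ref{cor1+} (see \cite{CS} and the references therein), this self-duality yields
\[
(B_0, B_1)_{1/2, 2} \simeq \ell_2(S_2).
\]

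Given this base identification, reiteration (\cite[p.~48]{BL}) then gives, for $0 < \theta < 1/2$,
\[
(B_0, B_1)_{\theta, q} \simeq (B_0, \ell_2(S_2))_{2\theta, q} \simeq (X^r_1 \cap X^c_1,\, X^r_2 \cap X^c_2)_{2\theta, q},
\]
using the isomorphism $\ell_2(S_2) \simeq X^r_2 \cap X^c_2$. By Proposition~\ref{pro11}(i) applied with $p_0 = 1$, $p_1 = 2$, this equals $X^r_{p, q} \cap X^c_{p, q}$ with $1/p = (1 - 2\theta) + \theta = 1 - \theta$, as required. Symmetrically, for $1/2 < \theta < 1$, reiteration gives $(B_0, B_1)_{\theta, q} \simeq (\ell_2(S_2), B_1)_{2\theta - 1, q} \simeq (X^r_2 + X^c_2,\, X^r_\infty + X^c_\infty)_{2\theta - 1, q}$, which by Proposition~\ref{pro11}(ii) applied with $p_0 = \infty$, $p_1 = 2$ equals $X^r_{p, q} + X^c_{p, q}$, again with $1/p = 1 - \theta$.

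The only non-routine ingredient is Step~1, specifically pinning down the Hilbert space emerging from self-duality as $\ell_2(S_2)$ (rather than some other self-dual Hilbert space). This is precisely the \cite{CS}-style input already exploited in Corollary~\ref{cor1+}, and since $B_0, B_1$ sit in exactly the same framework (an intersection $X^r_1 \cap X^c_1$ and its natural dual $X^r_\infty + X^c_\infty$, paired by the trace, interpolated at the quadratic mid-point), no new machinery is needed beyond what the paper has already invoked.
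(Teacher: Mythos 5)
Your proposal is correct and is essentially the proof the paper has in mind: the paper dispatches Corollary~\ref{cor2} with the single phrase ``Exchanging the r\^oles of 1 and $\infty$, we obtain,'' and you have simply carried out that exchange by rerunning the proof of Corollary~\ref{cor1+} (self-duality at $\theta=1/2$ via \cite{CS}, then reiteration into Proposition~\ref{pro11}) for the pair $(B_0,B_1)$. One small point to keep in mind: strictly, $(X^r_1)^*$ is $B(\ell_2)(R)$ rather than $X^r_\infty=S_\infty(R)$ (compacts), so $B_0^*$ is the von Neumann--algebra version of $B_1$ rather than $B_1$ itself; this is harmless because, exactly as in Remark~\ref{rem0}, passage to the bidual of the $\infty$-endpoint does not change the real interpolation spaces, and the \cite{CS} self-duality argument then pins the midpoint down as $\ell_2(S_2)$.
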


\begin{rk} It is not difficult to extend the results of this section
to a general non-commutative $L_p$-space (associated to a semi-finite
faithful normal trace) in place of $S_p$.
\end{rk}
\begin{rk} I do not know whether Proposition \ref{pro11} is valid for arbitrary $p_0\not = p_1$.
\end{rk}
\section{Connection with free probability}\label{sec6}

We refer to \cite{VDN} for all undefined notions in this section. Let $({\cl M},\varphi)$ be a ``non-commutative probability space'', i.e.\ $({\cl M},\varphi)$ is as $(M,\tau)$ was before but we impose $\varphi(1)=1$.

Let $(\xi_n)$ be a free family in ${\cl M}$. In what follows we assume that $(\xi_n)$ is either a free semi-circular (sometimes called ``free Gaussian'') family, or a free circular one (this corresponds to complex valued Gaussians) or a (free) family of Haar unitaries. The latter are the free analogues of Steinhaus random variables. They can be realized as the free generators of the free group ${\bb F}_\infty$ in the associated von~Neumann algebra (the so-called ``free group factor''). We could also include the free analogue of the Rademacher functions, i.e.\ a free family of copies of a single random choice of sign $\vp=\pm 1$. See \cite{RX} for a discussion of more general free families. Consider now a finite sequence $x=(x_n)$ with $x_n\in L_{p,q}(M,\tau)$.

The free analogue of Khintchine's inequality is the following fact that was (essentially) observed in \cite{HP2}. There are absolute positive constants $c,C$ such that for any $1\le p,q\le \infty$
\begin{equation}\label{eq60}
c|||x|||_{p,q} \le \left\|\sum \xi_n\otimes x_n\right\|_{L_{p,q}(\varphi\otimes\tau)}\le C|||x|||_{p,q}.
\end{equation}
In \cite{HP2} only the cases $p=1$ and $p=\infty$ are considered, but it is also pointed out there that the orthogonal projection onto $\overline{\text{span}}[\xi_n]$ is completely bounded on $L_p(\varphi)$ both for $p=1$ and $p=\infty$. From that simultaneous complementation, it is then routine to deduce \eqref{eq60}.
\begin{rk}
In particular, we have $|||x|||_{2,q} \simeq \|\sum \xi_n\otimes x_n\|_{L_{2,q}(\varphi\times\tau)}$. Perhaps our problem to compute more explicitly $|||x|||_{2,q}$ can be tackled by a more detailed study of the distribution of the ``non-commutative variable'' $\sum \xi_n\otimes x_n$. But while,  in Voiculescu's theory of the free Gaussian case, the ``$R$-transform'' (free analogue of the Fourier transform) can be calculated, it is not clear (even in case $\xi_n$,  $x_n$ and hence $\sum\xi_n\otimes x_n$ are     all       self-adjoint) how to use it to estimate  the spectral distribution of $\sum\xi_n\otimes x_n$ or, say, its $L_{2,q}$ norm. 
\end{rk}
Combining this with Theorem~\ref{thm0} (with a general $M$ in place of $B(H)$) we find:
\begin{prop}
For any $1\le p<\infty$ and $1\le q\le  \infty$ and for any sequence $x=(x_n)$ of operators in $L_{p,q}(\tau)$ we have
\begin{equation}\label{eq61}
\left\|\sum \vp_n\otimes x_n\right\|_{L_{p,q}(\mu\times\tau)} \simeq \left\|\sum \xi_n\otimes x_n\right\|_{L_{p,q}(\varphi\times\tau)}.
\end{equation}
\end{prop}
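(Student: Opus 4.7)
The proof is essentially a two-line combination of the two results already in hand, so my plan is to chain them together and handle the small range mismatch at the endpoints.

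First, I would apply the extension of Theorem \ref{thm0} to a general semifinite $(M,\tau)$ (noted in the Remark after Theorem \ref{thm0}, with \eqref{eq1b} valid also at $q=\infty$) to obtain
\[
\left\|\sum \vp_n\otimes x_n\right\|_{L_{p,q}(\mu\times\tau)} \simeq |||x|||_{p,q}
\]
for $1<p<\infty$ and $1\le q\le \infty$. Then I would invoke \eqref{eq60} from \cite{HP2} to obtain
\[
|||x|||_{p,q} \simeq \left\|\sum \xi_n\otimes x_n\right\|_{L_{p,q}(\varphi\otimes\tau)}
\]
in the same range. Chaining the two equivalences yields \eqref{eq61} for $1<p<\infty$.

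For the endpoint $p=1$ (and also $p=\infty$, which is implicit if one reads the statement with the usual conventions) the two averages are directly comparable to natural row/column expressions: for $p=1$ the Rademacher average is equivalent to $\|x\|_{A_1}=\|x\|_{S_1(R)+S_1(C)}$ by the classical non-commutative Khintchine inequality of Lust--Piquard--Pisier \cite{LPP}, and the free average is equivalent to the same quantity by the $p=1$ case explicitly treated in \cite{HP2}. This handles the remaining case without going through the interpolation norm $|||\cdot|||_{p,q}$.

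The only step that requires some care is making sure that Theorem \ref{thm0}, stated for $B(\ell_2)$ in the text, really does extend to an arbitrary semifinite von~Neumann algebra $(M,\tau)$ on the right-hand side of \eqref{eq61}; but the Remark following Theorem \ref{thm0} explicitly asserts this, pointing to the ingredients assembled in \cite{PX}. Hence no additional work is needed and the proof reduces to assembling the three displayed equivalences. I expect no genuine obstacle: the substance of the proposition lies entirely in Theorem \ref{thm0} and in \eqref{eq60}, and the proposition itself is just their juxtaposition.
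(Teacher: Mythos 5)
Your chaining of Theorem~\ref{thm0} (extended to general semifinite $(M,\tau)$) with \eqref{eq60} is exactly the paper's argument, which is given as the single sentence preceding the Proposition. Adding a separate treatment of $p=1$ via \cite{LPP} and the $p=1$ case of \cite{HP2} is a sensible supplement, since Theorem~\ref{thm0} is stated only for $1<p<\infty$; note, however, that this covers only $q=1$, so the Lorentz endpoint $L_{1,q}$ with $q\ne 1$ is not actually addressed by your argument (nor, for that matter, by the paper's one-line proof). More importantly, your parenthetical claim that $p=\infty$ is ``implicit if one reads the statement with the usual conventions'' is wrong: the Proposition deliberately restricts to $p<\infty$, and the sentence immediately following it in the paper points out that the equivalence \emph{fails} at $p=\infty$, because $\text{span}_{L_\infty(\mu)}\{\vp_n\}\simeq \ell_1$ while $\text{span}_{L_\infty(\varphi)}\{\xi_n\}\simeq \ell_2$. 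So you should delete that parenthetical rather than suggest the endpoint can be absorbed.
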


\n Note that this clearly fails for $p=\infty$ since $\text{span}_{L_\infty(\mu)}\{\vp_n\}\simeq \ell_1$ while $\text{span}_{L_\infty(\varphi)}\{\xi_n\}\simeq \ell_2$.

We refer the reader to \cite{HT2} for far reaching extensions of \eqref{eq60} or \eqref{eq61} involving random matrices. It is tempting to look for a direct, more conceptual proof of \eqref{eq61} but this has always eluded us (see however \cite{DR}). Note also that analogues of \eqref{eq60} and \eqref{eq61} (as well as \eqref{eq1}) are entirely open for $0<p<1$.

\begin{rem}\label{rk-fr}
Proposition \ref{pro11} (ii) yields some  information on the distribution function
of sums such as $S = \sum \xi_n \otimes x_n$.
Since the spans of the  variables $(\xi_n )$
are  completely complemented
simultaneously in $L_1(\varphi)$ and $L_\infty(\varphi)$ (a fortiori they form a $K$-closed pair),
it is easy to check that, if we set $S = \sum c_n \otimes x_n$
 or $S = \sum \lambda(g_n) \otimes x_n$, 
 uniformly over $t$
$$
K_t(x;A_1,A_0) \simeq K_t(S; L_1(\tau\times {\rm tr}), L_\infty(\tau\times {\rm tr}))=
    \int^t_0 S^\dagger(s) ds,$$
where $S^\dagger(s)$ denotes the generalized $s$-number
of $S$ in the sense of \cite{FK}.
Similarly,  by a well known fact (see \cite[p. 109]{BL}), we have uniformly over $t$
\begin{align*}
K_t(x;A_{1/2,2},A_0) \simeq K_t(S; L_2(\tau\times {\rm tr}), L_\infty(\tau\times {\rm tr}))
\simeq  \left(\int^{t^2}_0 S^\dagger(s)^2 ds\right)^{1/2}.
\end{align*}
The fact that
  the pair $(X^r_\infty\cap X^c_\infty, X^r_2\cap X^c_2)$ is $K$-closed,
  seems to yield some further information. Indeed, the $K_t$-functional for that pair can be estimated simply from the corresponding result for the (easy) pairs $(X^r_\infty,X^r_\infty)$ and $(X^c_\infty,X^c_2)$. So we have
\begin{equation}\label{eq22}
K_t(x;X^r_2\cap X^c_2, X^r_\infty \cap X^c_\infty)  \simeq \max\{K_t(x;X^r_2,X^r_\infty), K_t(x;X^c_2,X^c_\infty)\}  ,\end{equation}
and by a well known fact (see \cite[p. 109]{BL})
$$   K_t(x;X^r_2,X^r_\infty)\simeq\left(\int^{t^2}_0 \left(\sum x_nx^*_n \right)^\dagger(s) \ ds\right)^{1/2} \ {\rm and} \ 
K_t(x;X^c_2,X^c_\infty)\}\simeq\left(\int^{t^2}_0\left(\sum x^*_nx_n  \right)^\dagger(s) \ ds\right)^{1/2}.
$$    Therefore, we find both \eqref{eq22} and 
\begin{equation}\label{eq23}
 \left(\int^{t^2}_0 S^\dagger(s)^2 ds\right)^{1/2}   \simeq \max\{K_t(x;X^r_2,X^r_\infty), K_t(x;X^c_2,X^c_\infty)\} ,
\end{equation}
 where, of course,  the equivalences are meant with constants independent of $t$.

However, a very short and direct proof  was recently given in \cite {DR} that there is a constant $c$ such that
$$\forall t>0\quad  S^\dagger(ct)\le   \left((\sum x_nx^*_n)^{1/2}\right)^\dagger(t) + \left((\sum x^*_nx_n)^{1/2}\right)^\dagger(t),  
$$
from which \eqref{eq23} (and hence \eqref{eq22}) follows easily.
\end{rem}


\begin{thebibliography}{99}
\bibitem{BL}J. Bergh and J. L\"ofstr\"om. \emph{ Interpolation spaces. An
 introduction.} Springer Verlag, New York. 1976.
 
  \bibitem{CS}
 F. Cobos and T. Schonbek, 
On a theorem by Lions and Peetre about interpolation between a Banach space and its dual. 
Houston J. Math. {\bf  24} (1998), 325-344.


%\bibitem{Cw} M. Cwikel,  
%On $(L^{po}(A_{o}),\,\ L^{p_{1}}(A_{1}))_{\theta },\,_{q}$(Lpo(Ao), Lp1(A1))?,q.
%Proc. Amer. Math. Soc. 44 (1974), 286-292.

 \bibitem{DR}  
S. Dirksen and \'E. Ricard,
Some remarks on noncommutative  Khintchine inequalities, preprint arxiv, Aug. 2011.

\bibitem{FK} 
T. Fack and H. Kosaki,  Generalized $s$-numbers of $\tau$-measurable operators,
\emph{Pacific J. Math.} {\bf 123} (1986), 269--300.

 \bibitem{HP2} U. Haagerup and G. Pisier, 
 Bounded linear operators between
$C^*-$algebras.
Duke Math. J. {\bf 71} (1993)  889-925.
\bibitem{HT2} 
U. Haagerup and S. Thorbj{\o}rnsen, Random matrices and $K$-theory for exact $C^*$-algebras, {Doc. Math.} {\bf 4} (1999), 341-450 (electronic).

\bibitem{HP} A. Hess and G. Pisier, The $K_t$-functional for the interpolation couple $(L^\infty (d\mu; L^1(d\nu),\allowbreak L^\infty (d\nu; L^1(d\mu))$, 
Quart. J. Math. Oxford Ser. (2) {\bf 46} (1995), 333-344. 

  \bibitem{HK}
F. Hiai and H. Kosaki,  Means for matrices and comparison of their norms. Indiana Univ. Math. J. {\bf 48} (1999),  899-936. 

\bibitem{Ji} Y. Jiao, Non-commutative martingale inequalities on  Lorentz spaces, Proc. Amer. Math. Soc. {\bf 138} (2010) 2431-2441.

\bibitem{JP} 
M. Junge and  J. Parcet,  Rosenthal's theorem for subspaces of noncommutative $L_p$, \emph{Duke Math. J.} {\bf 141} (2008),   75-122.

\bibitem{KLW} V.Kaftal, D.Larson and G.Weiss,
Quasitriangular subalgebras of semifinite von Neumann
algebras are closed, J. Funct. Anal. {\bf 107} (1992), 387-401. 

\bibitem{LT}  
M. Ledoux and M. Talagrand,   \emph{Probability in Banach spaces. Isoperimetry and processes.}  Springer-Verlag, Berlin, 1991. 

\bibitem{LMS} C. Le Merdy and F. Sukochev,
 Rademacher averages on noncommutative symmetric spaces,  
 J. Funct. Anal.  {\bf  255}  (2008),   3329-3355.
\bibitem{LP1} 
F. Lust-Piquard, In\'egalit\'es de Khintchine dans $C_p\;(1<p<\infty)$,
\emph{C. R. Acad. Sci. Paris S\'er. I Math.} {\bf 303} (1986),  289--292. 
  
 \bibitem{LPP} F. Lust-Piquard and G. Pisier,  Non-Commutative
Khintchine and Paley inequalities,  Arkiv f\"or Math. {\bf  29 } (1991), 241-260.

 \bibitem{Mali} L. Maligranda, Interpolation between sum and intersection of Banach spaces. J. Approx. Th.   {\bf 47} (1986) 42-53.

 \bibitem{P00} G. Pisier,  Les inegalit{\' e}s de Khintchine-Kahane, d'apr{\' e}s
C. Borell. S{\' e}minaire sur la G{\' e}om{\'e}trie des Espaces de
Banach {\bf 1977-78}, exp. 7, \'Ecole Polytechnique,
Palaiseau.

   \bibitem{P7} G.  Pisier, 
  Interpolation Between
$H^p$ Spaces and Non-Commutative
Generalizations I.  Pacific Math. J. {\bf 155} (1992)
341-368.

   \bibitem{P1}  
G.  Pisier, Complex
interpolation and regular operators between Banach
lattices. Archiv der Math. (Basel) {\bf 62} (1994) 261-269.

 \bibitem{P3}  
G.  Pisier,
Projections from a von Neumann
algebra onto a subalgebra. Bull. Soc. Math. France {\bf 123} (1995) 139-153.
 \bibitem{P4}  
G.  Pisier, Regular operators
between non-commutative $L_p$-spaces. 
Bull. Sci. Math. {\bf 119} (1995)
 95-118.


 \bibitem{P0} G. Pisier, The operator Hilbert space $OH$, complex
interpolation and tensor norms. Memoirs Amer.
Math. Soc.  vol. 122 , {\bf 585} (1996) 1-103.
 \bibitem{P000} G. Pisier,
Noncommutative
 vector valued $L_p$-spaces and completely
$p$-summing maps. Ast\'erisque (Soc. Math. France) {\bf 247} (1998) 1-131.

 \bibitem{P2}  
G.  Pisier, Introduction to operator space theory, \emph{London Mathematical Society Lecture Note Series, 294}, Cambridge University Press, Cambridge, 2003. viii+478 pp.  
 
  \bibitem{P6}  G.  Pisier,  Real interpolation  and transposition of certain function spaces, Preprint to appear.
 

\bibitem{PX}  
G.  Pisier and Q. Xu, Non-commutative $L^p$-spaces, \emph{Handbook of the geometry of Banach spaces}, Vol. 2, 1459--1517, North-Holland, Amsterdam, 2003.  

\bibitem{RX}  \'E. Ricard and Q. Xu, Khintchine type inequalities for reduced free products and applications.  
J. Reine Angew. Math. {\bf  599} (2006), 27Ð59. 

 \bibitem{Va}  N. Varopoulos, On an inequality of von Neumann and an application of the metric theory of tensor products to operator theory.
 J. Funct. Anal. {\bf 16} (1974) 83-100.
  \bibitem{VDN} D.  
 Voiculescu,   K.   Dykema, and A. Nica, 
\emph{Free random variables.}   Amer. Math. Soc., Providence, RI, 1992. 

  \bibitem{Xu} Q. Xu, Interpolation of operator spaces, J. Funct. Anal. {\bf 139} (1996) 500-539.
  
\end{thebibliography}
\end{document}